\documentclass[a4paper]{amsart}

\usepackage[utf8]{inputenc}
\usepackage{enumitem}
\usepackage{setspace}
\usepackage{mathtools}
\usepackage{tikz-cd}
\usepackage{lipsum}
\usepackage{tikz}
\usepackage{mathrsfs}
\usetikzlibrary{arrows}
\usepackage{amssymb}
\usetikzlibrary{decorations.pathmorphing}
\usepackage{bbold}
\usepackage{amsmath,amsthm}
\usepackage{xspace}
\usepackage{faktor}
\usepackage{calrsfs}
\usepackage{verbatim}
\usepackage{quiver}
\usepackage[pdfa]{hyperref}
\usepackage{amsfonts}
\usepackage{graphicx}
\usepackage{eucal}
\usepackage{amscd}
\usepackage[all,2cell]{xy}
\UseAllTwocells

\newtheorem{teo}{Theorem}[section]
\newtheorem{cor}[teo]{Corollary}
\newtheorem{lem}[teo]{Lemma}
\newtheorem{defi}[teo]{Definition}

\newtheorem{prop}[teo]{Proposition}
\newtheorem{example}[teo]{Example}

\newdir{ m>}{{}*!/-3.5pt/@{}*!/-8pt/:(1,-.2)@^{>}*!/-8pt/:(1,+.2)@_{>}}

\def\pullback{
	\ar@{-}[]+R+<6pt,-1pt>;[]+RD+<6pt,-6pt>%
	\ar@{-}[]+D+<1pt,-6pt>;[]+RD+<6pt,-6pt>}

\newcommand{\zeros}{\mathcal{Z}}
\newcommand{\zideal}{\mathcal{N}_{\zeros}}
\newcommand{\op}[1]{#1^{\mathrm{op}}}
\newcommand{\tort}{\mathcal{T}}
\newcommand{\torf}{\mathcal{F}}
\newcommand{\torsione}{(\tort, \torf)}

\newcommand{\facte}{\mathbb{E}}
\newcommand{\factm}{\mathbb{M}}
\newcommand{\frecce}{\Arr(\mathcal{C})}

\newcommand{\boole}{\mathbf{Boole}}

\newcommand{\graffe}[1]{\{ #1 \}}
\newcommand{\spigolose}[1]{\langle #1 \rangle}

\newcommand{\inv}[1]{(#1)^{-1}}

\newcommand{\arrz}{\Arr(\zeros)}
\newcommand{\arrc}{\Arr(\mathcal{C})}
\newcommand{\ce}{\mathbb{E}}
\newcommand{\cm}{\mathbb{M}}
\newcommand{\Hf}{\F_1}
\newcommand{\Ht}{\T_1}
\newcommand{\Hv}{\V_1}
\newcommand{\Two}{\mathbf{2}}
\newcommand{\Ze}{\mathbb{Z}}

\DeclareMathOperator{\Hom}{Hom}
\DeclareMathOperator{\Arr}{Arr}
\DeclareMathOperator{\Zker}{\zeros-ker}
\DeclareMathOperator{\Zcoker}{\zeros-coker}
\DeclareMathOperator{\ArrZker}{\arrz-ker}
\DeclareMathOperator{\ArrZcoker}{\arrz-coker}

\def\mathsfdef#1{\expandafter\def\csname#1\endcsname{{\sf#1}}}
\mathsfdef{Z}
\mathsfdef{F}
\mathsfdef{G}
\mathsfdef{T}
\mathsfdef{S}
\mathsfdef{C}
\mathsfdef{i}
\mathsfdef{P}
\mathsfdef{D}
\mathsfdef{j}
\mathsfdef{V}

\begin{document}
	
	\title[Torsion Theories in a non-pointed Context II]{Torsion Theories in a non-pointed Context II: iterated torsion theories}
	
	\author[A. Cappelletti]{Andrea Cappelletti}
	\address[Andrea Cappelletti]{Dipartimento di Matematica, Universit\`{a} degli Studi di Salerno, Via Giovanni Paolo II 132, 84084 Fisciano (SA), Italy}
	\thanks{}
	\email{andreacappellettiphd@gmail.com}
	
	\author[A. Montoli]{Andrea Montoli}
	\address[Andrea Montoli]{Dipartimento di Matematica ``Federigo Enriques'', Universit\`{a} degli Studi di Milano, Via Saldini 50, 20133 Milano, Italy}
	\thanks{} \email{andrea.montoli@unimi.it}
	
	\keywords{torsion theory, factorization system, Galois structure, non-pointed categories}
	
	\subjclass[2020]{18E40, 18E08, 18E13, 18E50, 18A20, 18A32}
	
	\begin{abstract}
		Generalizing a known result in the context of homological categories, we show that, in a regular protomodular category $\mathcal{C}$ equipped with a posetal, monocoreflective subcategory whose coreflector inverts monomorphisms, every torsion theory, in a non-pointed sense, satisfying suitable properties gives rise to a torsion theory in the category $\arrc$ of arrows in $\mathcal{C}$. Moreover, we show that the induced torsion theory in $\arrc$ satisfies the same good properties of the original one in $\mathcal{C}$, allowing to iterate the process. Examples of this situation can be found in protomodular varieties of universal algebras with more than one constant or in duals of elementary toposes.
	\end{abstract}
	
	\date{\today}
	
	\maketitle
	
	\section{Introduction}
	The categorical notion of torsion theory has been introduced in \cite{Dickson} for abelian categories and extended to general pointed categories in \cite{JT torsion th}. In the case of homological categories \cite{Bbbook}, namely pointed, regular and protomodular \cite{Bourn protomod} categories, it was shown in \cite{BG torsion th} that the notion of torsion theory is strongly connected to the ones of radical and of closure operator. The exploration of these connections with other important notions has been continued in \cite{Gran torsione}, where it is shown that torsion theories in a homological category, satisfying an additional condition (called Condition (N) there) concerning the fact that the torsion subobjects are characteristic in the sense of \cite{CM char}, correspond to stable factorization systems whose ``epimorphic'' part is made of cokernels. Moreover, the link between torsion theories and admissible Galois structures (in the sense of \cite{janelidze1990pure}) is widely explored in \cite{Gran torsione}; it is shown there, in particular, that torsion theories in a homological category whose reflector to the torsion-free subcategory is protoadditive give rise to Galois structures where the central and normal extensions have a simple description. The notion of protoadditive functor is a generalization, introduced in \cite{everaert2010homology}, of the classical notion of additive functor between additive categories.
	
	It was shown in \cite{EGVdL Hopf} that the reflector of a Birkhoff subcategory of a semi-abelian \cite{JMT semi-ab} category forms a Galois structure which allows to recover homology groups with coefficients in the Birkhoff subcategory by means of generalized Hopf formulae. These formulae are obtained using higher dimensional Galois structures built from the starting one formed by the Birkhoff reflector, replacing the ground category with the category of extensions (which, in the semi-abelian case, are just regular epimorphisms) in it and the reflective subcategory with the one of normal extensions with respect to the starting Galois structure. Iterating the process, one gets a sequence of derived Galois structures, that are used to compute the homology groups.
	
	In \cite{Gran torsione} these results of \cite{EGVdL Hopf} are extended to homological categories, replacing a Birkhoff reflector with the protoadditive reflector of a torsion-free subcategory. In order to do this, the authors first show that any torsion theory $\torsione$ in a homological category $\mathcal{C}$, satisfying Condition (N) and such that the torsion-free reflector is protoadditive, gives rise to a torsion theory $(\tort_1, \torf_1)$ in the category $\arrc$ of arrows in $\mathcal{C}$ still satisfying (N) and the protoadditivity of the reflector, allowing to iterate the process. Then, they restrict the torsion theory $(\tort_1, \torf_1)$ to the category of extensions in $\mathcal{C}$ (that, in this context, are effective descent morphisms), with the subcategory of normal extensions with respect to the associated Galois structure as torsion-free part. Again, this process can be iterated, giving rise to a sequence of derived torsion theories in the category of $n$-extensions, and to the associated Galois structures, from which homology groups are extracted.
	
	In recent years, several authors considered non-pointed versions of the notion of torsion theory. In \cite{GrandisJan torsion th} the authors introduced a notion of torsion theory in categories equipped with an ideal $\mathcal{N}$ of morphisms in the sense of C. Ehresmann \cite{Ehresmann}, using kernels and cokernels with respect to the ideal $\mathcal{N}$; they assumed that $\mathcal{N}$-kernels and $\mathcal{N}$-cokernels of identity morphisms exist, and they explored the relationship of this non-pointed notion of torsion theory with closure operators and factorization systems. A similar notion of torsion theory (called pretorsion theory there) was considered in \cite{Facchini2}, where kernels and cokernels (whose existence is not assumed) are considered with respect to the ideal of morphisms that factor through the intersection between the torsion subcategory and the torsion-free one. In \cite{CM torsion theories 1} we studied these non-pointed notions of torsion theory in the context of regular protomodular categories equipped with a posetal, monocoreflective subcategory $\zeros$ whose coreflector inverts monomorphisms, calling such a subcategory a class of zero objects. The name is justified by the fact that the objects of this subcategory play the role that, in the pointed case, the zero object plays. The main family of examples of such a context is given by regular protomodular categories with initial object and such that the unique morphism from the initial object to the terminal one is a regular epimorphism, and the class of zero objects is given by the regular quotients of the initial object. Concrete examples of such a situation are protomodular varieties of universal algebras with more than one constant, like unitary rings, Boolean algebras, Heyting algebras, MV-algebras, their topological models, as well as the dual of every elementary topos.
	
	In \cite{CM torsion theories 1} we generalized to our non-pointed context the results of \cite{Gran torsione} concerning the relationship between torsion theories, factorization systems, and Galois structures. In order to do that, we had to consider not only Condition (N) and protoadditivity of the torsion-free reflector, as in \cite{Gran torsione}, but also a few additional conditions that are automatically satisfied in the pointed case. Moreover, we exhibited several examples of torsion theories in our context satisfying all the needed conditions; these examples live in the categories of Heyting algebras, MV-algebras, in the duals of the toposes of simplicial sets and $M$-sets (where $M$ is a monoid), and in coslices of the category of abelian groups.
	
	The aim of the present work is to show that, also in our non-pointed context, every good torsion theory $\torsione$ in a category $\mathcal{C}$ with a class $\zeros$ of zero objects induces a torsion theory $(\tort_1, \torf_1)$ in the category $\arrc$ of arrows in $\mathcal{C}$, which is itself equipped with the class of zero objects $\arrz$ formed by arrows between objects in $\zeros$. Moreover, we show that all good properties of $\torsione$ are transferred to $(\tort_1, \torf_1)$. Showing that the properties that are trivially satisfied in the pointed case are transferred to $(\tort_1, \torf_1)$ is relatively easy, while the proofs that Condition (N) and protoadditivity of the torsion-free reflector pass to $(\tort_1, \torf_1)$ are quite different from the ones given in \cite{Gran torsione} for the pointed case, and far more complicated. Still, all the properties are transferred to the torsion theory in $\arrc$, allowing to iterate the process and get a sequence of torsion theories $(\tort_n, \torf_n)$ in $\Arr^n(\mathcal{C})$. Showing that these iterated torsion theories can be restricted to categories of $n$-extensions, in order to get homology groups from the associated Galois theories, is material for future work.
	
	\section{Non-pointed torsion theories}
	In this section we recall from \cite{CM torsion theories 1} the context in which we are interested, as well as the results concerning the connection between the notion of torsion theory and those of factorization system and Galois structure.
	
	\begin{defi}
		Let $\mathcal{C}$ be a category. A full subcategory $\zeros$ is a \emph{class of zero objects} if it is a posetal monocoreflective subcategory such that the coreflector inverts monomorphisms.
	\end{defi}
	
	Given a class $\zeros$ of zero objects in a category $\mathcal{C}$, we denote by $\Z$ the coreflector of $\zeros$ and, given an object $C$ of $\mathcal{C}$, we call $\Z(C)$ the \emph{zero part} of $C$.
	
	We observed in \cite[Remark 2.2]{Cappelletti} that such a class $\zeros$, when it exists, is unique. So, admitting a class of zero objects is a property of the category $\mathcal{C}$. The context we will mainly consider is the one of a regular category $\mathcal{C}$ with initial object, where we take as $\zeros$ the class of regular quotients of the initial object. Examples of this situation are elementary toposes, their duals, all varieties of universal algebras with at least one constant, and their topological models. \\
	
	Given a category $\mathcal{C}$ with a class $\zeros$ of zero objects, we denote by $\mathcal{N}_{\zeros}$ the class of arrows of $\mathcal{C}$ factorizing through an object of $\zeros$. It forms an ideal of morphisms in the sense of \cite{Ehresmann}, so we can consider kernels and cokernels relatively to the ideal $\mathcal{N}_{\zeros}$ as done in \cite{Ehresmann2}. We will call them $\zeros$-kernels and $\zeros$-cokernels respectively. Let us recall the definition:
	
	\begin{defi}
		Let $\mathcal{C}$ be a category with a fixed class $\zeros$ of zero objects. Let $f \colon A \to B$ be a morphism in $\mathcal{C}$. We say that a morphism $k \colon K \to A$ in $\mathcal{C}$ is
		a \emph{$\zeros$-kernel} of $f$ if the following properties are satisfied:
		\begin{itemize}
			\item[(a)] $fk \in \mathcal{N}_{\zeros}$;
			\item[(b)] whenever $e: E \to A$ is a morphism in $\mathcal{C}$ and $fe \in \mathcal{N}_{\zeros}$, then there exists a unique morphism $\varphi \colon E \to K$ in $\mathcal{C}$ such that $k \varphi = e$.
		\end{itemize}
		$\zeros$-cokernels are defined dually.
	\end{defi}
	
	In a regular category $\mathcal{C}$ with a class $\zeros$ of zero objects, $\zeros$-kernels of morphisms always exist, while the existence of $\zeros$-cokernels is not guaranteed. See \cite{Cappelletti} for a detailed discussion on this. We will frequently denote by $K[f]$ the domain of a $\zeros$-kernel of a morphism $f$.
	
	The notions of $\zeros$-kernels and $\zeros$-cokernels allow the definition of exact sequences relatively to $\zeros$:
	
	\begin{defi}
		Let $\mathcal{C}$ be a category with a fixed class $\zeros$ of zero objects. A \emph{short $\zeros$-exact sequence} (or simply short exact sequence, when there is no ambiguity) in $\mathcal{C}$ is a sequence
		\[\begin{tikzcd}
			A & B & C
			\arrow["f", from=1-1, to=1-2]
			\arrow["g", from=1-2, to=1-3]
		\end{tikzcd}\]
		in which $f$ is a $\zeros$-kernel of $g$ and $g$ is a $\zeros$-cokernel of $f$.
	\end{defi}
	
	In \cite{GrandisJan torsion th} a non-pointed version of the classical notion of torsion theory has been proposed, using kernels and cokernels with respect to a suitable ideal $\mathcal{N}$ of morphisms, under the assumption that $\mathcal{N}$-kernels and $\mathcal{N}$-cokernels of the identity morphisms exist. In \cite{CM torsion theories 1} we considered essentially the same notion, with respect to the ideal $\mathcal{N}_{\zeros}$ of arrows factoring through the class of zero objects in a regular category $\mathcal{C}$, but without the assumption of the existence of kernels and cokernels. Let us recall our definition (that should also be compared with the one of pretorsion theory introduced in \cite{Facchini2}):
	
	\begin{defi} \label{definizione preesatta}
		Let $\mathcal{C}$ be a category with a class $\zeros$ of zero objects. A \emph{$\zeros$-torsion theory} (or simply torsion theory, if there is no ambiguity) $(\tort, \torf)$ in $\mathcal{C}$ consists of two full, replete subcategories $\tort, \torf$ of $\mathcal{C}$ satisfying the following conditions:
		\begin{itemize}
			\item[(a)] $\zeros = \tort \cap \torf$;
			\item[(b)] $\Hom(T, F) \subseteq \zideal$ for every object $T \in \tort$ , $F \in \torf$;
			\item[(c)] for every object $A$ of $\mathcal{C}$ there is a short $\zeros$-exact sequence
			\[\begin{tikzcd}
				{\T(A)} & A & {\F(A)}
				\arrow["{t_A}", from=1-1, to=1-2]
				\arrow["{\eta_A}", from=1-2, to=1-3]
			\end{tikzcd}\]
			with $\T(A) \in \tort$ and $\F(A) \in \torf$.
		\end{itemize}
	\end{defi}
	
	As shown in \cite{Facchini2}, every $\zeros$-torsion theory as above gives rise to functors $\T \colon \mathcal{C} \to \tort$ and $\F \colon \mathcal{C} \to \torf$. The first one is right adjoint to the inclusion $\tort \hookrightarrow \mathcal{C}$, while the second is left adjoint to the inclusion $\torf \hookrightarrow \mathcal{C}$. Hence $\tort$ is a monocoreflective subcategory of $\mathcal{C}$, while $\torf$ is an epireflective subcategory. \\
	
	In \cite{Gran torsione}, the authors studied, in the framework of homological categories (namely pointed, regular, protomodular \cite{Bourn protomod} categories), the connections between the notion of torsion theory and the ones of factorization system and Galois structure. In \cite{CM torsion theories 1} we extended these results to the context of a regular protomodular category $\mathcal{C}$ with a class $\zeros$ of zero objects, and in particular to the case of regular protomodular categories in which the unique morphism $0 \to 1$ from the initial to the terminal object is a regular epimorphism, taking as $\zeros$ the subcategory of regular quotients of $0$. We briefly recall here the main ingredients needed to extend those connections.
	
	\begin{defi}
		Let $\mathcal{C}$ be a category with a class $\zeros$ of zero objects. A torsion theory $\torsione$ in $\mathcal{C}$ satisfies \emph{Condition (N)} if, for every diagram
		\[\begin{tikzcd}
			{\T(K[f])} & {K[f]} & A & B
			\arrow["f", from=1-3, to=1-4]
			\arrow["k", from=1-2, to=1-3]
			\arrow["{t_{K[f]}}", from=1-1, to=1-2]
		\end{tikzcd}\]
		where $k = \Zker(f)$, $kt_{K[f]}$ is the $\zeros$-kernel of some arrow.
	\end{defi}
	
	\begin{defi}
		Let $\mathcal{C}$ be a category with a class $\zeros$ of zero objects. A torsion theory $\torsione$ in $\mathcal{C}$ satisfies \emph{Condition (M)} if, for every short exact sequence
		\[\begin{tikzcd}
			T & A & Q,
			\arrow["k", from=1-1, to=1-2]
			\arrow["q", from=1-2, to=1-3]
		\end{tikzcd}\]
		where $T \in \tort$, $\F(T) \cong \Z(Q)$.
	\end{defi}
	
	Condition (N) above was already considered in \cite{Gran torsione} in the pointed case, where it is equivalent to the fact that all torsion subobjects are characteristic in the sense of \cite{CM char}. Condition (M), instead, is typical of the non-pointed situation, being trivially satisfied in pointed categories.
	
	\begin{prop}[\cite{CM torsion theories 1}, Proposition $3.5$]\label{Da torsione a sistema}
		Let $\mathcal{C}$ be a regular protomodular category with a class $\zeros$ of zero objects. Consider a torsion theory $\torsione$ in $\mathcal{C}$ satisfying Conditions (N) and (M). Then, the classes of arrows $(\facte, \factm)$, where
		\[ \facte \coloneqq \{ e \in \frecce \, | \, e \text{ is a $\zeros$-cokernel, } K[e] \in \tort \} \]
		and
		\[ \factm \coloneqq \{ m \in \frecce | K[m] \in \torf \} \]
		form a factorization system which is stable with respect to the class of arrows inverted by the functor $\Z$.
	\end{prop}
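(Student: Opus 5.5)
To prove Proposition \ref{Da torsione a sistema}, I will build the factorization of an arbitrary arrow explicitly from the torsion theory and then check uniqueness and stability. Given $f \colon A \to B$, I take $k = \Zker(f) \colon K[f] \to A$ and the torsion part $t_{K[f]} \colon \T(K[f]) \to K[f]$. By Condition (N), the composite $k t_{K[f]}$ is a $\zeros$-kernel of some arrow. In a regular protomodular category with zero objects, a $\zeros$-kernel is then the $\zeros$-kernel of its $\zeros$-cokernel, which exists here, being a regular quotient: it is the quotient by the equivalence relation generated, as in \cite{Cappelletti}, by the normal subobject. So I set $e \colon A \to A/\T(K[f])$ to be this $\zeros$-cokernel. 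Since $f k t_{K[f]} \in \zideal$, $f$ factors as $f = m e$. By construction $e$ is a $\zeros$-cokernel with $K[e] \cong \T(K[f]) \in \tort$, so $e \in \facte$.

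The central step is to show $m \in \factm$, i.e.\ $K[m] \in \torf$. I will compare kernels. The map $e$ restricts to a regular epimorphism $K[f] \to K[m]$, using regularity and protomodularity: the image of $K[f]$ under $e$ is $K[m]$ up to the zero part. Its $\zeros$-kernel is $\T(K[f])$, so this restriction yields a short exact sequence $\T(K[f]) \to K[f] \to K[m]$. By uniqueness of the torsion exact sequence, $K[m] \cong \F(K[f])$, which lies in $\torf$. I expect this step to be the main obstacle. In the non-pointed setting, kernels of $m$ and the image of $K[f]$ need not coincide exactly; they may differ on zero parts, since $\Z$ of the objects involved can change. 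This is where Condition (M) enters: applied to $\T(K[f]) \to A \to A/\T(K[f])$, it gives $\F(\T(K[f])) \cong \Z(A/\T(K[f]))$. This controls the zero part of the quotient and ensures that $K[m]$ and $\F(K[f])$ have matching zero parts, so the comparison is an isomorphism. I will use the fact that $\Z$ inverts monomorphisms to reduce the comparison to checking that a certain monomorphism is inverted.

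For orthogonality $\facte \perp \factm$, given a square $m u = v e$ with $e \in \facte$ and $m \in \factm$, the composite $u \circ \Zker(e)$ maps $K[e] \in \tort$ into $K[m]$, because $m u k_e = v e k_e \in \zideal$. By axiom (b) this map is in $\zideal$, so $u k_e \in \zideal$. Hence $u$ factors through the $\zeros$-cokernel $e$, and the factorization makes both triangles commute because $m$ is a monomorphism modulo its kernel, which is handled via protomodularity. Closure of both classes under isomorphisms is immediate, giving a factorization system.

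Finally, for stability, I pull back $e \in \facte$ along a morphism $g$ inverted by $\Z$. Pullbacks preserve regular epimorphisms and, by protomodularity, $\zeros$-kernels up to zero parts. Because $\Z(g)$ is an isomorphism, the kernel of the pulled-back arrow is isomorphic to $K[e]$, and it remains a $\zeros$-cokernel. This is again where the hypothesis on $\Z$ is needed to identify zero parts.
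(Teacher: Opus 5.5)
Your construction is the one the paper recalls in its proof that $(\tort_1,\torf_1)$ is a torsion theory: $e=\Zcoker(kt_K)$, $f=me$, and $K[m]\cong\F(K)$ with $\Zker(m)\,\eta_K=ek$. The gap is that you never supply the one genuinely non-pointed ingredient, and you locate the difficulty in the wrong place. Since $f=me$, one has $K[f]=e^{-1}(K[m])$ exactly. So $\bar e\colon K[f]\to K[m]$ is the pullback of $e$ along $\Zker(m)$: a regular epimorphism onto all of $K[m]$, with $\zeros$-kernel $t_K$. There is no discrepancy ``up to zero parts'' at that point.

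What fails is the inference ``its $\zeros$-kernel is $\T(K[f])$, so this is a short exact sequence.'' In this setting a regular epimorphism need not be the $\zeros$-cokernel of its $\zeros$-kernel. For example, in unitary rings $\Ze\to\Ze_2$ has $\zeros$-kernel $1_{\Ze}$, while $\Zcoker(1_{\Ze})=1_{\Ze}$.

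What is needed is a comparison lemma. Let $\theta$ be defined by $\theta\eta_K=\bar e$. Then $\theta$ is a regular epimorphism whose $\zeros$-kernel is $\varepsilon_{\F(K)}$, because $\eta_K$ and $\bar e$ have the same $\zeros$-kernel and $\eta_K$ is a regular epimorphism. If moreover $\Z(\theta)$ is invertible, protomodularity forces $R[\theta]$ to be discrete: apply it to the split epimorphism $R[\theta]\to\F(K)$ pulled back along $\varepsilon_{\F(K)}$. Hence $\theta$ is an isomorphism. This is the kind of statement the paper draws from Lemma 5.4 of \cite{Cappelletti} in its last proposition.

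Invertibility of $\Z(\theta)$ uses (M) twice:
\begin{itemize}
\item $\Z(K[m])\cong\Z(C)\cong\F(\T(K))$, from the sequence $\T(K)\to A\to C$, which you mention;
\item $\Z(\F(K))\cong\F(\T(K))$, from $\T(K)\to K\to\F(K)$, i.e.\ (M'), which you do not.
\end{itemize}
Posetality of $\zeros$ then does the rest. Your plan of ``reducing to a monomorphism being inverted'' does not produce this argument.

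The same missing ingredient breaks the stability paragraph, where (M) is not used at all and invertibility of $\Z(g)$ does not suffice. Take $g=\varepsilon_B$, which $\Z$ inverts. The pullback of $e\in\facte$ along it is $K[e]\to\Z(B)$, whose $\zeros$-kernel is all of $K[e]$. Since $\Zcoker(1_{K[e]})=\eta_{K[e]}$, this arrow is a $\zeros$-cokernel if and only if $\Z(B)\cong\F(K[e])$, which is precisely Condition (M).

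A correct route goes as follows. First, $K[e']\cong K[e]$ by pasting pullbacks; only invertibility of $\Z(g)$ is used here. Let $c\colon P\to D$ be the $\zeros$-cokernel of $\Zker(e')$. Condition (M) gives $\Z(D)\cong\F(K[e])\cong\Z(B)\cong\Z(C)$. The comparison lemma then shows that the induced $D\to C$ is an isomorphism.

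Finally, in the orthogonality step your conclusion is right but the reason is not. Once $u=de$, both $md=v$ and the uniqueness of $d$ follow because the $\zeros$-cokernel $e$ is an epimorphism ($mde=mu=ve$). Nothing about $m$ or protomodularity is involved.
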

	
	To get a torsion theory from a factorization system, we need an additional condition on our class $\zeros$:
	
	\begin{defi}[\cite{CM torsion theories 1}]\label{def terminal class}
		The class $\zeros$ of zero objects, in a category $\mathcal{C}$, is \emph{terminal} if for every $A \in \mathcal{C}$ there exists an arrow $v_A \colon A \rightarrow \V(A)$ such that:
		\begin{itemize}
			\item[(a)] $ \V(A) \in \zeros$;
			\item[(b)] for every $\chi \colon A \rightarrow Z$, with $Z \in \zeros$, there exists a (necessarily unique) arrow $\varphi \colon Z \rightarrow \V(A)$ such that $\varphi \chi = v_A$.
		\end{itemize}
	\end{defi}
	
	It was observed in \cite{CM torsion theories 1} that the assignment $A \mapsto \V(A)$ actually gives rise to a functor $\V \colon \op{\mathcal{C}} \rightarrow \zeros$, whose image on arrows is the following: given $f \colon A \to B$ in $\mathcal{C}$, $\V(f)$ is the unique arrow such that $\V(f)v_Bf=v_A$.
	
	In our main framework of a regular protomodular category in which the unique morphism $0 \to 1$ from the initial to the terminal object is a regular epimorphism, the class $\zeros$ formed by regular quotients of $0$ is obviously terminal, since the terminal object $1$ belongs to $\zeros$.
	
	\begin{prop}[\cite{CM torsion theories 1}, Proposition $3.10$]
		Let $\mathcal{C}$ be a regular category with a terminal class $\zeros$ of zero objects. Consider a factorization system $(\facte, \factm)$ which is stable with respect to the class of arrows inverted by $\Z$, and where the arrows in $\facte$ are $\zeros$-cokernels. Then, the pair of subcategories $\torsione$ given by
		\[ \tort \coloneqq \{ T \in \mathcal{C} \,|\, \exists \, T \rightarrow Z \text{ belonging to } \facte \text{ (where $Z \in \zeros$)}\} \]
		and
		\[ \torf \coloneqq \{ F \in \mathcal{C} \,|\, F \rightarrow \V(F) \text{ belongs to } \factm\} \]
		forms a torsion theory such that $\tort \cap \torf = \zeros$ and satisfying (N) and (M).
	\end{prop}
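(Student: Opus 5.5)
We need to prove that, given a factorization system $(\facte,\factm)$ in a regular category with a terminal class $\zeros$, stable with respect to the arrows inverted by $\Z$ and with $\facte$ consisting of $\zeros$-cokernels, the pair $\torsione$ defined above is a torsion theory with $\tort\cap\torf=\zeros$ satisfying (N) and (M).

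The plan is to build, for each object $A$, the candidate short exact sequence by factorizing the arrow $v_A\colon A\to\V(A)$. Write $v_A = m_A e_A$ with $e_A\colon A\to \F(A)$ in $\facte$ and $m_A\colon \F(A)\to\V(A)$ in $\factm$. Then let $t_A\colon \T(A)\to A$ be the $\zeros$-kernel of $e_A$, which exists by regularity. Since $e_A$ is a $\zeros$-cokernel, it is the $\zeros$-cokernel of its own $\zeros$-kernel, so $\T(A)\to A\to\F(A)$ is short exact.

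\emph{Membership of the two parts.} To show $\F(A)\in\torf$, I will check that $\V(\F(A))\cong\V(A)$ via $m_A$. Any $\chi\colon\F(A)\to Z$ yields $\chi e_A$, which factors through $v_A$ by terminality. Since $e_A$ is an epimorphism, this forces $v_{\F(A)}=m_A$ up to isomorphism, so $v_{\F(A)}\in\factm$. To show $\T(A)\in\tort$, I will pull back $e_A$ along the arrow $\Z(\F(A))\to\F(A)$; this arrow is inverted by $\Z$, since $\Z$ is idempotent as a coreflector. By stability, the pulled-back arrow $K\to\Z(\F(A))$ lies in $\facte$. It remains to identify $K$ with $\T(A)$. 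The pullback of $e_A$ along the zero part of its codomain is precisely the $\zeros$-kernel: an arrow $E\to A$ with $e_A\circ(-)\in\zideal$ factors through some $Z\in\zeros$, hence through the coreflection $\Z(\F(A))$.

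\emph{The three axioms.} For $\tort\cap\torf=\zeros$: if $T$ lies in both, then $T\to Z$ is in $\facte$ and $v_T\in\factm$. Orthogonality then forces $T\to Z$ to be invertible, using that $v_T$ factors through it. For (b), given $f\colon T\to F$, I will use orthogonality of the square formed by $T\to Z$ in $\facte$ and $v_F\in\factm$, with bottom arrow the map $Z\to\V(F)$ given by terminality. The diagonal shows that $f$ factors through $Z$.

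\emph{Conditions (N) and (M).} For (M), given an exact sequence $T\to A\to Q$, I expect $\F(T)$ to be computed as the image of $T\to Z$, namely $Z$ itself. Pullback stability then identifies $Z$ with $\Z(Q)$. For (N), I will show that the composite $K[f]\to A$, precomposed with $t_{K[f]}$, is the $\zeros$-kernel of the $\facte$-part of a factorization. Concretely, factor $A\to A/\T(K[f])$ via $(\facte,\factm)$ and use stability to transport the kernel. I expect (N) and the identification of $\T(A)$ as a pullback to be the main obstacles. Both require showing that $\zeros$-kernels in a regular category are pullbacks along the zero part of the codomain, and that this interacts correctly with the monocoreflection; this is where monomorphism-inversion by $\Z$ is used.
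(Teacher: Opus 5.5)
The paper only quotes this result from the earlier work, so I judge your sketch on its own. Building $\T(A)$ and $\F(A)$ by factoring $v_A$ is fine, and so are your proofs that $\F(A)\in\torf$ and $\T(A)\in\tort$.

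\textbf{The gap.} You never establish the lemma that drives (N) and (M): \emph{for every $m\colon X\to B$ in $\factm$, one has $K[m]\in\torf$.} It needs two ingredients you do not mention.
\begin{enumerate}
\item Every arrow with domain in $\zeros$ lies in $\factm$. Factor such an arrow as $me$. Then $e\in\facte$ is a $\zeros$-cokernel of some arrow whose codomain is a zero object, so of an arrow in $\zideal$. Hence the identity factors through $e$, so $e$ is a split mono; being a $\zeros$-cokernel, it is epi, hence invertible.
\item $\factm$ is pullback-stable. So the pullback $K[m]\to\Z(B)$ of $m$ along $\varepsilon_B$ lies in $\factm$. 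Composing with the arrow $\Z(B)\to\V(K[m])$ given by terminality, which is in $\factm$ by (1), gives $v_{K[m]}\in\factm$.
\end{enumerate}
Ingredient (1) is also what you silently need for $\zeros\subseteq\tort\cap\torf$ (namely $v_Z\in\factm$), an inclusion you do not address. You need it too for $\F(T)\cong Z$ in (M), since $v_T$ factors as $e$ followed by an arrow between zero objects.

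\textbf{Condition (N).} Without this lemma your argument gives only one inclusion. Stability of $\facte$ shows $K[e]\in\tort$, hence $K[e]\le\T(K[f])$, but not the converse. Factoring ``$A\to A/\T(K[f])$'' is also not available, since that $\zeros$-cokernel need not exist in a merely regular category. Factor $f=me$ itself instead. Then $m(ekt_K)=fkt_K\in\zideal$ forces $ekt_K$ to factor through $K[m]\in\torf$. By axiom (b), $ekt_K\in\zideal$, so $kt_K=\Zker(e)$.

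\textbf{Condition (M).} The step ``pullback stability identifies $Z$ with $\Z(Q)$'' requires $q\in\facte$ first, and you never show this. Factor $q=me'$. The same argument gives $K[e']=T$ as subobjects of $A$, so $e'$ and $q$ are both $\zeros$-cokernels of $k$, and $q\in\facte$. Only then does pulling back along $\varepsilon_Q$ give $T\to\Z(Q)$ in $\facte$. Uniqueness of the $(\facte,\factm)$-factorization of $v_T$ then yields $\F(T)\cong Z\cong\Z(Q)$.

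\textbf{Axiom (b).} The arrow $Z\to\V(F)$ does not come from terminality, which only produces arrows into $\V(T)$. It exists because $e$ is the $\zeros$-cokernel of $\Zker(e)$ and $v_Ff$ lands in a zero object.
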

	
	In order to talk about the connection between torsion theories and Galois structure, we need to recall from \cite{janelidze1990pure} the main notions of categorical Galois theory:
	
	\begin{defi}
		Let $\mathcal{C}$ be a category with pullbacks. A class $\mathbb{C}$ of arrows in $\mathcal{C}$ is admissible when:
		\begin{itemize}
			\item[(a)] every isomorphism is in $\mathbb{C}$;
			\item[(b)] $\mathbb{C}$ is closed under composition;
			\item[(c)] in the pullback
			\[\begin{tikzcd}
				\bullet & \bullet \\
				\bullet & \bullet
				\arrow["c", from=1-1, to=1-2]
				\arrow["a"', from=2-1, to=2-2]
				\arrow["d"', from=1-1, to=2-1]
				\arrow["b", from=1-2, to=2-2]
				\arrow["\lrcorner"{anchor=center, pos=0}, draw=none, from=1-1, to=2-2]
			\end{tikzcd}\]
			if $a$ and $b$ are in $\mathbb{C}$ then $c$ and $d$ are in $\mathbb{C}$, too.
		\end{itemize}
	\end{defi}
	
	Let $\mathbb{C}$ be an admissible class of morphisms in a category $\mathcal{C}$. For an object $C$ of $\mathcal{C}$, we write $\mathbb{C}/C$ for the category whose objects are the pairs $(X,f)$ where $f \colon X \rightarrow C$ is in $\mathbb{C}$, and whose arrows $h \colon (X,f) \rightarrow (Y,g)$ are all arrows in $\mathcal{C}$ such that $gh=f$.
	
	\begin{defi}
		A \emph{relatively admissible adjunction} consists of an adjunction $\S \dashv \C \colon \mathcal{P} \rightarrow \mathcal{A}$ (where $\mathcal{A}$ and $\mathcal{P}$ have pullbacks) and two admissible classes $\mathbb{A} \subseteq \mathcal{A}$, $\mathbb{P} \subseteq \mathcal{P}$ of arrows, such that
		\begin{itemize}
			\item[(a)] $\S(\mathbb{A}) \subseteq \mathbb{P}$,
			\item[(b)] $\C(\mathbb{P}) \subseteq \mathbb{A}$,
			\item[(c)] for every object $A$ of $\mathcal{A}$ the component $\eta_A$ of the unit of the adjunction $\S \dashv \C$ is in $\mathbb{A}$,
			\item[(d)] for every object $X$ of $\mathcal{P}$ the component $\varepsilon_X$ of the counit of the adjunction $\S \dashv \C$ is in $\mathbb{P}$.
		\end{itemize}
		We denote a relatively admissible adjunction with $(\S,\C,\mathbb{A},\mathbb{P})$.
	\end{defi}
	
	Given a relatively admissible adjunction $\S \dashv \C \colon \mathcal{P} \rightarrow \mathcal{A}$, we denote by \linebreak $\C_A \colon \mathbb{P}/\S(A) \rightarrow \mathbb{A}/A$, where $A$ is an object of $\mathcal{A}$, the functor operating on objects as $\C_A(g \colon P \rightarrow \S(A)) \coloneqq \pi_A$ where $\pi_A$ is the upper morphism in the following pullback diagram:
	\[\begin{tikzcd}
		K & A \\
		{\C(P)} & {\C\S(A),}
		\arrow["{\C(g)}"', from=2-1, to=2-2]
		\arrow["{\eta_A}", from=1-2, to=2-2]
		\arrow["{\pi_A}", from=1-1, to=1-2]
		\arrow["{\pi_{\C(P)}}"', from=1-1, to=2-1]
		\arrow["\lrcorner"{anchor=center, pos=0}, draw=none, from=1-1, to=2-2]
	\end{tikzcd}\]
	with $\eta_A$ the component in $A$ of the unit of the adjunction $\S \dashv \C$.
	
	\begin{defi}
		A relatively admissible adjunction $(\S,\C,\mathbb{A},\mathbb{P})$ is \emph{admissible} when the functor $\C_A$ is full and faithful for every object $A$ of $\mathcal{A}$.
	\end{defi}
	
	\begin{defi}
		Given an admissible adjunction $(\S,\C,\mathbb{A},\mathbb{P})$,
		\begin{itemize}
			\item[(a)] a \emph{trivial extension} is an arrow $f \colon A \rightarrow B$ of $\mathbb{A}$ such that the square
			\[\begin{tikzcd}
				A & {\C\S(A)} \\
				B & {\C\S(B)}
				\arrow["f"', from=1-1, to=2-1]
				\arrow["{\C\S(f)}", from=1-2, to=2-2]
				\arrow["{\eta_A}", from=1-1, to=1-2]
				\arrow["{\eta_B}"', from=2-1, to=2-2]
				\arrow["\lrcorner"{anchor=center, pos=0}, draw=none, from=1-1, to=2-2]
			\end{tikzcd}\]
			is a pullback (where $\eta$ is the unit of the adjunction $\S \dashv \C$);
			\item[(b)] a \emph{normal extension} is an arrow $f$ of $\mathbb{A}$ such that it is an effective descent morphism relatively to $\mathbb{A}$ and its kernel pair projections are trivial extensions;
			\item[(c)] a \emph{central extension} is an arrow $f$ of $\mathbb{A}$ such that there exists an effective descent morphism $g$ relatively to $\mathbb{A}$ such that the pullback $g^*(f)$ of $f$ along $g$ is a trivial extension.
		\end{itemize}
	\end{defi}
	
	To extend to our non-pointed context the connection between torsion theories and admissible Galois structures developed in \cite{Gran torsione} for homological categories, we need our torsion theories to satisfy some additional conditions, that are automatically satisfied in the pointed case:
	
	\begin{defi}
		A torsion theory $\torsione$, in a category $\mathcal{C}$ with a class $\zeros$ of zero objects, satisfies \emph{Condition (M')} if, for every short exact sequence
		\[\begin{tikzcd}
			\T(A) & A & \F(A),
			\arrow["t_A", from=1-1, to=1-2]
			\arrow["\eta_A", from=1-2, to=1-3]
		\end{tikzcd}\]
		one has $\F(\T(A))=\Z(\F(A))$.
	\end{defi}
	
	It is immediate to see that Condition (M) implies Condition (M').
	
	\begin{defi}
		Let $\mathcal{C}$ be a category with a class $\zeros$ of zero objects, and $\torsione$ a torsion theory in $\mathcal{C}$. Consider the following pullback in $\mathcal{C}$
		\[\begin{tikzcd}
			{T'} & {Z'} \\
			T & Z,
			\arrow["\chi", from=1-1, to=1-2]
			\arrow[from=1-1, to=2-1]
			\arrow["\lrcorner"{anchor=center, pos=0}, draw=none, from=1-1, to=2-2]
			\arrow[from=1-2, to=2-2]
			\arrow["{\eta_T}"', from=2-1, to=2-2]
		\end{tikzcd}\]
		where $T \in \tort$, $Z,Z' \in \zeros$, and $\eta_T$ is the $T$-component of the unit of the reflection induced by $\torsione$. We say that $\torsione$ satisfies \emph{Condition (S)} if, for every pullback diagram of the type above, one has $T' \in \tort$ and $\chi=\eta_{T'}$.
	\end{defi}
	
	\begin{prop}[\cite{CM torsion theories 1}, Corollary $4.16$]
		Let $\mathcal{C}$ be a regular protomodular category with a fixed class $\zeros$ of zero objects. Consider a torsion theory $\torsione$ in $\mathcal{C}$ satisfying Conditions (M') and (S). Then, the data $\Gamma \coloneqq (\F, \i, \Arr(\mathcal{C}), \Arr(\torf))$, where $\i \colon \torf \hookrightarrow \mathcal{C}$ is the inclusion functor, determine an admissible Galois structure.
	\end{prop}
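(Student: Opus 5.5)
We have to show that $\Gamma = (\F, \i, \Arr(\mathcal{C}), \Arr(\torf))$ is an admissible Galois structure. Since the classes are all arrows, they are trivially admissible: they contain isomorphisms, are closed under composition and are stable under pullback. Conditions (a)--(d) of a relatively admissible adjunction are then automatic, using $\F \dashv \i$. The whole content therefore lies in showing that, for every $A \in \mathcal{C}$, the functor $\i_A \colon \torf/\F(A) \to \mathcal{C}/A$ is full and faithful. This functor sends $g \colon F \to \F(A)$ to the pullback $\pi_A \colon A\times_{\F(A)} F \to A$ of $g$ along $\eta_A$.

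By the standard criterion of categorical Galois theory, $\i_A$ is full and faithful for all $A$ exactly when the reflector $\F$ preserves those pullbacks. Concretely, for every $g\colon F \to \F(A)$ with $F \in \torf$, the image under $\F$ of the pullback square
\[ P = A\times_{\F(A)} F \xrightarrow{\ \pi_F\ } F, \qquad \pi_A \colon P \to A \]
should be a pullback. Equivalently, the canonical comparison $\F(P) \to F$ should be an isomorphism, that is, $\pi_F$ should be (isomorphic to) $\eta_P$. So the plan is to prove that $\pi_F \colon P \to F$ is a $\zeros$-cokernel whose $\zeros$-kernel lies in $\tort$. Then $\F(P)\cong F$, by uniqueness of the short exact sequence in Definition \ref{definizione preesatta}(c).

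To do this, I would first compute the $\zeros$-kernel of $\pi_F$. The argument uses pullback stability of $\zeros$-kernels, which holds in a regular protomodular setting. The $\zeros$-kernel of $\pi_F$ is the pullback along $g$ of the $\zeros$-kernel of $\eta_A$, restricted over the zero part of $F$. More precisely, $K[\pi_F]$ is obtained by pulling back $\T(A)\to \F(A)$, which factors through $\Z(\F(A))$, along the zero part $\Z(F)\to \Z(\F(A))$ of $g$. This gives a pullback
\[ K[\pi_F] \to \Z(F), \qquad K[\pi_F] \to \T(A), \]
lying over $\eta_{\T(A)}$-type data. Condition (M') identifies $\F(\T(A))$ with $\Z(\F(A))$, so the map $\T(A)\to \Z(\F(A))$ is exactly $\eta_{\T(A)}$. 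We are then in the situation of Condition (S), with $T=\T(A)$, $Z=\Z(\F(A))$ and $Z'=\Z(F)$. Condition (S) yields $K[\pi_F]\in\tort$, with $K[\pi_F]\to \Z(F)$ equal to its reflection unit.

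Next I would show that $\pi_F$ is a $\zeros$-cokernel of its $\zeros$-kernel. Since $\eta_A$ is a regular epimorphism (a $\zeros$-cokernel) and regular epimorphisms are pullback stable in a regular category, $\pi_F$ is a regular epimorphism. Protomodularity then lets one recover a regular epimorphism as the $\zeros$-cokernel of its $\zeros$-kernel, provided the zero parts match. That $\pi_F$ is inverted by $\Z$ on the relevant part follows from the same pullback computation, since $\Z$ inverts monos and $\Z(P)\to\Z(F)$ is controlled by the unit identified by (S). This yields a short exact sequence $K[\pi_F]\to P\to F$ with $K[\pi_F]\in\tort$ and $F\in\torf$, so $F\cong \F(P)$ and $\pi_F=\eta_P$.

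I expect the main obstacle to be the precise identification of $K[\pi_F]$ as the pullback appearing in Condition (S). This requires careful use of how $\zeros$-kernels interact with pullbacks and zero parts in the non-pointed setting, in particular matching $\Z(F)$ with the correct object $Z'$. I would handle this by composing pullback squares and using that $\Z$ is a coreflector inverting monomorphisms.
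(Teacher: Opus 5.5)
The paper only quotes this result from its predecessor, so there is no in-text proof to compare with; on its own merits the proposal has a gap at one step.

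\textbf{What is correct.} Your reduction is right. With all arrows admissible, everything comes down to showing that $\pi_F\colon P=A\times_{\F(A)}F\to F$ is isomorphic to $\eta_P$. A short $\zeros$-exact sequence $K\xrightarrow{k}P\to F$ with $K\in\tort$ and $F\in\torf$ does give $F\cong\F(P)$, since any $h\colon P\to F'$ with $F'\in\torf$ has $hk\in\zideal$ by axiom (b). Your identification of $K=K[\pi_F]$ is also correct; it only needs pasting of pullbacks and $g\varepsilon_F=\varepsilon_{\F(A)}\Z(g)$, not protomodularity. Conditions (M') and (S) then yield $K\in\tort$ and $\chi=\eta_K$ for the projection $\chi\colon K\to\Z(F)$.

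\textbf{The gap.} The problem is the step asserting $\pi_F=\Zcoker(k)$. You invoke the principle that a regular epimorphism is the $\zeros$-cokernel of its $\zeros$-kernel once the zero parts match. This holds in the pointed homological case but is false here. In $\mathbf{Ring}$, with $\zeros$ the quotients of $\Ze$, the map $\mathrm{ev}_0\colon\Ze[x]\to\Ze$ is a regular epimorphism inverted by $\Z$, and its $\zeros$-kernel is $1_{\Ze[x]}$. Yet $\mathrm{ev}_1\in\zideal$ does not factor through $\mathrm{ev}_0$. So regularity of $\pi_F$ plus information on $\Z(\pi_F)$ cannot carry the argument. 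You also never establish that $\Z(\pi_F)$ is invertible, and it is not needed.

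\textbf{How to close it.} Use the half of (S) you recorded but did not use, namely $\chi=\eta_K$, together with a genuinely protomodular argument.

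Since $K\in\tort$ and $\chi=\eta_K$, every arrow from $K$ into an object of $\zeros$ factors uniquely through $\chi$. So if $h\colon P\to C$ has $hk\in\zideal$, then $hk=w\chi$ for some $w\colon\Z(F)\to C$. Hence $\pi_F=\Zcoker(k)$ reduces to showing that the pullback square formed by $k,\chi,\pi_F,\varepsilon_F$ is a pushout. This fails in a merely regular category such as $\mathbf{Set}$, which is why protomodularity is needed.

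To get the pushout property, pull back the split epimorphism $d_0\colon R[\pi_F]\to P$, with section the diagonal $\delta$, along $k$; the result is $R[\chi]$. By protomodularity, $R[\chi]\to R[\pi_F]$ and $\delta$ are jointly strongly epic. The arrows $hd_0$ and $hd_1$ agree on $\delta$ trivially, and on $R[\chi]$ because $hk=w\chi$. Hence $hd_0=hd_1$, so $h$ factors uniquely through $\pi_F$, which is the coequalizer of its kernel pair. Then $K\to P\to F$ is short $\zeros$-exact and your conclusion follows.
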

	
	To get a nice description of central and normal extensions for a Galois structure arising from a torsion theory, even in the pointed case studied in \cite{Gran torsione}, a property of the reflector $\F$, which is a generalization of additivity for a functor between additive categories, is needed:
	
	\begin{defi} \label{def protoadditive}
		Let $\mathcal{C}$ be a protomodular category with pullbacks and a fixed class $\zeros$ of zero objects. A functor $\F \colon \mathcal{C} \rightarrow \torf$, where $\torf$ is a protomodular category, is said to be \emph{$\mathcal{Z}$-protoadditive} (or simply protoadditive if there is no ambiguity) if $\F$ preserves the pullbacks of split epimorphisms along arrows inverted by $\Z$. We say that a torsion theory $\torsione$ in $\mathcal{C}$ satisfies Condition \emph{(P)} if the reflector $\F$ is $\mathcal{Z}$-protoadditive.
	\end{defi}
	
	\begin{teo}[\cite{CM torsion theories 1}, Theorem $4.17$] \label{centrali e torsione}
		Let $\mathcal{C}$ be a Barr-exact protomodular category with a class $\zeros$ of zero objects. Consider a torsion theory $\torsione$ in $\mathcal{C}$ satisfying Conditions (S), (M'), and (P). Let $f$ be an effective descent morphism in $\mathcal{C}$ (i.e.\ a regular epimorphism, since $\mathcal{C}$ is Barr-exact) and let $\Gamma$ be the Galois structure associated with the reflector $\F$. Then, the following conditions are equivalent:
		\begin{itemize}
			\item[(a)] $f$ is a normal extension for $\Gamma$;
			\item[(b)] $f$ is a central extension for $\Gamma$;
			\item[(c)] $K[f] \in \torf$.
		\end{itemize}
	\end{teo}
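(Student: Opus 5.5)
The plan is to prove the cycle (a) $\Rightarrow$ (b) $\Rightarrow$ (c) $\Rightarrow$ (a). The first implication is formal. If $f$ is normal, then $f$ is itself an effective descent morphism. Pulling $f$ back along $f$ gives a kernel pair projection of $f$, which is a trivial extension by assumption. Hence $f$ is central, with $g=f$.

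For (b) $\Rightarrow$ (c) I would argue in two steps.
\begin{itemize}
\item First, every trivial extension $p \colon A \to B$ has $K[p] \in \torf$. Since the square $\eta_B p = \F(p)\eta_A$ is a pullback, the $\zeros$-kernel of $p$ is obtained by pulling back the $\zeros$-kernel of $\F(p)$. Here I must check that $\zeros$-kernels are computed by pullback, using that $\Z$ inverts monomorphisms and that $\zeros$ is posetal. So $K[p]$ admits a monomorphism into $K[\F(p)]$, which is a subobject of $\F(A) \in \torf$. The class $\torf$ is closed under subobjects. Indeed, for a mono $m \colon S \to F$ with $F \in \torf$, the composite $m t_S$ lies in $\zideal$ by axiom (b). Then $t_S$ factors through a zero object, so $\T(S) \in \zeros$, and the exact sequence gives $S \cong \F(S)$. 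Condition (M') is what keeps the zero parts compatible in this step.
\item Second, if $g^*(f)$ is trivial for an effective descent (regular epi) $g$, then $K[g^*(f)] \cong K[f]$. In a regular protomodular category, pullback along a regular epimorphism preserves $\zeros$-kernels up to isomorphism, again after matching zero parts via $\Z$. Hence $K[f] \in \torf$.
\end{itemize}

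For (c) $\Rightarrow$ (a), $f$ is effective descent by hypothesis, so I only need the kernel pair projection $\pi_1 \colon R[f] \to A$ to be trivial. This is a split epimorphism whose $\zeros$-kernel is isomorphic to $K[f] \in \torf$. So it suffices to prove that every split epimorphism $p \colon E \to A$ with section $s$ and $K[p] \in \torf$ is a trivial extension. I would do this in four steps.
\begin{itemize}
\item Form the pullback $P$ of $\F(p)$ along $\eta_A$.
\item Let $u \colon E \to P$ be the comparison map. Both $E \to A$ and $P \to A$ are split epimorphisms over $A$: the first with section $s$, the second with section induced by $\F(s)$.
\item By protomodularity, the (non-pointed) split short five lemma shows $u$ is an isomorphism as soon as it induces an isomorphism on $\zeros$-kernels, i.e.\ as soon as $K[p] \to K[\F(p)]$ is an isomorphism.
\item Condition (P) makes $\F$ preserve the relevant pullbacks of split epimorphisms along $\Z$-inverted arrows, so $\F$ carries the split exact sequence $K[p] \to E \to A$ to a split exact sequence. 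This gives $K[\F(p)] \cong \F(K[p]) \cong K[p]$, the last isomorphism because $K[p] \in \torf$.
\end{itemize}

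The main obstacle is this last step. In the non-pointed setting the kernel is the pullback along $\Z(A) \to A$ rather than along a zero object, and $\F$ need not preserve it. I would first reduce to $\Z$-inverted arrows using the coreflection counit, and then use Condition (S) to control how $\eta$ behaves on the pulled-back torsion parts. The non-pointed split short five lemma itself I would derive directly from protomodularity applied to the pullback along $\Z(A) \to A$.
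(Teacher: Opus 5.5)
The paper only quotes this theorem from the authors' earlier work and gives no proof, so your proposal is judged on its own. The skeleton (a)$\Rightarrow$(b)$\Rightarrow$(c)$\Rightarrow$(a) is right, as is the key lemma that a split epimorphism whose $\zeros$-kernel lies in $\torf$ is a trivial extension. The step (a)$\Rightarrow$(b) is fine.

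\textbf{The gap in (b)$\Rightarrow$(c).} You keep using the pointed fact that $\zeros$-kernels are invariant under base change, and this is false here. For $g\colon E\to B$, pasting pullbacks and using $g\varepsilon_E=\varepsilon_B\Z(g)$ gives $K[g^*(f)]\cong K[f]\times_{\Z(B)}\Z(E)$. This differs from $K[f]$ as soon as $\Z(g)$ is not invertible. In unitary rings, with $f=1_{\Ze_2}$ and $g\colon\Ze\to\Ze_2$, one has $K[g^*(f)]=K[1_{\Ze}]=\Ze$ while $K[f]=\Ze_2$.

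So your second step of (b)$\Rightarrow$(c) fails, and ``matching zero parts'' does not rescue it. The comparison $K[g^*(f)]\to K[f]$ is a pullback of $\Z(g)$. Every arrow between zero objects is a regular epimorphism, because its image lies in $\zeros$ and $\Z$ inverts monomorphisms. Hence $K[f]$ is only a regular quotient of $K[g^*(f)]$, and $\torf$ is not closed under quotients.

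A genuinely non-pointed argument is needed, and (S) supplies one. Put $T=\T(K[f])$. The map $T\to K[f]\to\Z(B)$ factors through $\eta_T$ because $\Z(B)\in\torf$. Put $W=\F(T)\times_{\Z(B)}\Z(E)$. Then $T\times_{K[f]}K[g^*(f)]\cong T\times_{\F(T)}W$ is a subobject of $K[g^*(f)]\in\torf$. The pullback of $\eta_T$ along $\Z(W)\to W\to\F(T)$ is a subobject of $T\times_{\F(T)}W$, so it also lies in $\torf$. By (S), this pullback of $\eta_T$ is the unit of an object of $\torf$, hence invertible. It is a pullback along a regular epimorphism in a regular category, so $\eta_T$ is invertible. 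Thus $T\in\zeros$, so $t_{K[f]}\in\zideal$ and $K[f]\cong\F(K[f])\in\torf$.

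\textbf{The same conflation in the rest of the proof.} Where you only need membership in $\torf$ it is harmless. One has $K[\pi_1]\cong K[f]\times_{\Z(B)}\Z(A)$, and for a trivial $p\colon A\to B$ one has $K[p]\cong\Z(B)\times_{\F(B)}\F(A)$. Both are limits of objects of $\torf$, hence in $\torf$. Your claimed monomorphism $K[p]\to K[\F(p)]$ is a pullback of $\Z(\eta_B)$ and need not be monic, and (M') plays no role at that point.

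In the key lemma, however, you aim at the wrong isomorphism. Let $\overline{p}\colon P\to A$ be the pullback of $\F(p)$ along $\eta_A$. Then $K[\overline{p}]\cong\Z(A)\times_{\F(A)}\F(E)\cong\Z(A)\times_{\Z(\F(A))}K[\F(p)]$, which is not $K[\F(p)]$. Indeed, $K[p]\to K[\F(p)]$ is not invertible even for the trivial extension $p=1_A$ whenever $\Z(\eta_A)$ is not.

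The obstacle you flag disappears once you notice that $\varepsilon_A\colon\Z(A)\to A$ is a monomorphism, hence inverted by $\Z$. So (P) applies to the very pullback defining $K[p]$, giving $\F(K[p])\cong\F(E)\times_{\F(A)}\Z(A)\cong K[\overline{p}]$. Since $K[p]\in\torf$, the comparison $K[p]\to K[\overline{p}]$ is this isomorphism, up to the invertible $\eta_{K[p]}$. Protomodularity (pulling back points along $\varepsilon_A$ reflects isomorphisms) then gives $E\cong P$. No appeal to (S) is needed in this step.
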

	
	In \cite{CM torsion theories 1} several examples of non-pointed torsion theories, satisfying all the conditions we mentioned, are described.
	
	\section{Torsion theories in the category of arrows}
	It was shown in \cite{Gran torsione} that a torsion theory $\torsione$ satisfying Condition (N) in a homological category $\mathcal{C}$ induces a torsion theory $(\tort_1, \torf_1)$ in the category $\mathrm{Arr}(\mathcal{C})$ of arrows of $\mathcal{C}$, satisfying itself Condition (N). Iterating the process, one obtains a sequence $(\tort_n, \torf_n)$ of torsion theories in $\mathrm{Arr}^n(\mathcal{C})$ satisfying Condition (N). If, moreover, $\torsione$ satisfies Condition (P), then all the iterated torsion theories $(\tort_n, \torf_n)$ satisfy (P), as well. \\
	
	In this section we will show that, also in our non-pointed context, the torsion theory $(\tort_1, \torf_1)$ in $\mathrm{Arr}(\mathcal{C})$ can be constructed. First of all, we need to identify the class of zero objects in $\mathrm{Arr}(\mathcal{C})$:
	
	\begin{prop}
		If $\zeros \subseteq \mathcal{C}$ is a class of zero objects in a regular category $\mathcal{C}$, then $\arrz \subseteq \arrc$, seen as a full subcategory, is a class of zero objects.
	\end{prop}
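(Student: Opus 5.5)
Throughout, $\arrz$ is the full subcategory of $\arrc$ whose objects are arrows $z\colon Z_0\to Z_1$ with $Z_0,Z_1\in\zeros$. I will check the three defining properties componentwise. Monomorphisms in $\arrc$ are not always componentwise, so I will also use regularity of $\mathcal{C}$ to show that a monomorphism in $\arrc$ has monic components.

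\emph{Posetality.} Let $(\alpha_0,\alpha_1)$ be a morphism in $\arrc$ between two objects of $\arrz$. Its components are arrows in $\zeros$, and $\zeros$ is posetal. So there is at most one such pair.

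\emph{The coreflector.} Given $f\colon A\to B$, define $\Z_1(f)\coloneqq \Z(f)\colon \Z(A)\to\Z(B)$. The counit is the pair $(\epsilon_A,\epsilon_B)$, where $\epsilon$ is the counit of $\Z$; it is a morphism in $\arrc$ by naturality of $\epsilon$. Its components are monomorphisms, since $\zeros$ is monocoreflective. Hence the counit is a monomorphism in $\arrc$, because monomorphisms are detected componentwise in one direction.

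For the universal property, take $z\colon Z_0\to Z_1$ in $\arrz$ and a morphism $(a,b)\colon z\to f$. The components $a$ and $b$ factor uniquely through $\epsilon_A$ and $\epsilon_B$, as $a'$ and $b'$. To see that $(a',b')$ is a morphism $z\to\Z(f)$, compose with the monomorphism $\epsilon_B$:
\[\epsilon_B\Z(f)a'=f\epsilon_Aa'=fa=bz=\epsilon_Bb'z.\]
Cancelling $\epsilon_B$ gives $\Z(f)a'=b'z$. Uniqueness follows from uniqueness in each component. Thus $\Z_1$ is right adjoint to the inclusion, with monic counit.

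\emph{Inverting monomorphisms.} This is the step I expect to need care. Let $(m_0,m_1)\colon f\to g$ be a monomorphism in $\arrc$, with $f\colon A\to B$ and $g\colon C\to D$. I must show that $m_0$ and $m_1$ are monomorphisms in $\mathcal{C}$; then $\Z(m_0)$ and $\Z(m_1)$ are isomorphisms, so $\Z_1(m_0,m_1)$ is an isomorphism.

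For $m_1$, test with the objects $1_X\colon X\to X$. Given $u,v\colon X\to B$ with $m_1u=m_1v$, the pairs $(0?)$ are not available, so instead consider the pullback $P$ of $f$ and $u$ (and of $f$ and $v$). This does not obviously work, so I would instead use the kernel pair $R$ of $m_1$. The object $1_R$ maps into $f$ only if we can lift to $A$, which again is problematic.

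A cleaner route: morphisms $h\to f$ from $h\colon X\to Y$ correspond to pairs compatible with $f$. Since $\mathcal{C}$ is regular it has pullbacks. So test $m_1$ with the object $f^*\colon A\times_B X\to X$, namely the pullback of $f$ along $u$ and $v$ jointly: form $P=A\times_B K$, where $K$ is the kernel pair of $m_1$.

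Even this gives only that $m_1$ is monic on the image of $f$. Indeed, monomorphisms in $\arrc$ genuinely need not have monic codomain component. For example, when $A$ is initial, any $m_1$ yields a monomorphism $(1_A,m_1)$. In that case $\Z(m_1)$ need not be invertible, so the claim would need an additional input.

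I therefore expect the real obstacle to be this: either restrict to components that are monic — arguing that the relevant monomorphisms (e.g.\ $\Z$-kernels, counits) are componentwise — or find a subtler argument. For $m_0$, componentwise injectivity does hold: test with $u,v\colon X\to A$ via the objects $fu,fv$, using $(u,1)$ and $(v,1)$. Hence $\Z(m_0)$ is invertible. For the codomain I would try to exploit the fact that objects of $\zeros$ are quotients of the initial object in the main examples, to show that $\Z(m_1)$ is invertible anyway; failing that, I would check whether the intended statement concerns componentwise monomorphisms.
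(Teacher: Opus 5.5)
Your posetality argument and your construction of the coreflection are correct. Cancelling the mono $\varepsilon_B$ works as well as the paper's appeal to posetality of $\zeros$. The genuine gap is the last defining property: you never prove that $\Z_1$ inverts monomorphisms. Your suspicion that it might fail rests on a false counterexample. Take $A$ initial and suppose $m_1\colon B\to D$ is not monic, with $u\neq v\colon Y\to B$ and $m_1u=m_1v$. The unique arrow $h\colon A\to Y$ is an object of $\arrc$. The pairs $(1_A,u),(1_A,v)\colon h\to f$ are morphisms of $\arrc$, since the squares commute because $A$ is initial. Both are identified by $(1_A,m_1)$, so $(1_A,m_1)$ is not a monomorphism. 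More generally, since a regular category has pullbacks, monomorphisms in $\arrc$ are exactly the levelwise monomorphisms, because limits in $\arrc$ are computed levelwise. The paper uses exactly this fact, stated in one line.

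Here is how to close the gap. Let $(m_0,m_1)\colon f\to g$ be a monomorphism, with $f\colon A\to B$ and $g\colon C\to D$. Let $p_0,q_0\colon R_0\to A$ and $p_1,q_1\colon R_1\to B$ be the kernel pairs of $m_0$ and $m_1$. Since $m_1fp_0=gm_0p_0=gm_0q_0=m_1fq_0$, there is $r\colon R_0\to R_1$ with $p_1r=fp_0$ and $q_1r=fq_0$. Then $(p_0,p_1),(q_0,q_1)\colon r\to f$ are morphisms of $\arrc$ equalized by $(m_0,m_1)$, so $p_0=q_0$ and $p_1=q_1$. Hence both $m_0$ and $m_1$ are monic, and $\Z_1(m_0,m_1)=(\Z(m_0),\Z(m_1))$ is invertible. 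This argument also replaces your test for $m_0$, which does not work as written. The morphisms $(u,1_B)\colon fu\to f$ and $(v,1_B)\colon fv\to f$ have different domains unless $fu=fv$, so they cannot be used to test a monomorphism. The right test object there would be $1_X$, with the morphisms $(u,fu),(v,fv)\colon 1_X\to f$.
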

	
	\begin{proof}
		Let $z_1, z_2 \in \arrz$. A morphism in $\arrz$ between these two objects consists simply of a morphism in $\mathcal{C}$ between their domains and a morphism in $\mathcal{C}$ between their codomains. Since $\zeros$ is a posetal category, it follows immediately that $\arrz$ inherits the property of being posetal.
		
		Now, let $a \colon A \to B$ be an object of $\arrc$. Since $\zeros$ is monocoreflective in $\mathcal{C}$, the coreflector $\Z$ provides the following commutative diagram, where the horizontal arrows (the components of the counit) are monomorphisms:
		\[\begin{tikzcd}
			{\Z(A)} & A \\
			{\Z(B)} & B.
			\arrow["{\varepsilon_A}", tail, from=1-1, to=1-2]
			\arrow["{\Z(a)}"', from=1-1, to=2-1]
			\arrow["a", from=1-2, to=2-2]
			\arrow["{\varepsilon_B}"', tail, from=2-1, to=2-2]
		\end{tikzcd}\]
		Hence, the pair $(\varepsilon_A, \varepsilon_B)$, viewed as a morphism in $\arrc$, is a monomorphism.
		
		Finally, consider an object $z \colon Z_1 \to Z_2$ in $\arrz$ and a morphism $(x_1, x_2) \colon z \to a$ in $\arrc$. Since $\zeros$ is coreflective in $\mathcal{C}$, the universal property of the counit guarantees the existence of a unique morphism $y_1 \colon Z_1 \to \Z(A)$ such that $\varepsilon_A y_1 = x_1$, and a unique morphism $y_2 \colon Z_2 \to \Z(B)$ such that $\varepsilon_B y_2 = x_2$. Since $Z_1$ and $\Z(B)$ belong to $\zeros$, and $\zeros$ is a posetal category, there is at most one morphism from $Z_1$ to $\Z(B)$. Thus, the composites $\Z(a) y_1$ and $y_2 z$ must coincide, which means that the left square of the following diagram commutes:
		\[\begin{tikzcd}
			& {Z_1} & \\
			{\Z(A)} && A \\
			& {Z_2} \\
			{\Z(B)} && B.
			\arrow["{y_1}"', from=1-2, to=2-1]
			\arrow["{x_1}", from=1-2, to=2-3]
			\arrow["z"{pos=0.7}, from=1-2, to=3-2]
			\arrow["{\varepsilon_A}"{pos=0.3}, from=2-1, to=2-3]
			\arrow["{\Z(a)}"', from=2-1, to=4-1]
			\arrow["a", from=2-3, to=4-3]
			\arrow["{y_2}"', from=3-2, to=4-1]
			\arrow["{x_2}", from=3-2, to=4-3]
			\arrow["{\varepsilon_B}"'{pos=0.3}, from=4-1, to=4-3]
		\end{tikzcd}\]
		Consequently, $(y_1, y_2)$ is the unique morphism in $\arrc$ such that $(\varepsilon_A, \varepsilon_B)(y_1, y_2) = (x_1, x_2)$. This proves that $\arrz$ is coreflective in $\arrc$, with the $a$-component of the counit given by $\varepsilon_a = (\varepsilon_A, \varepsilon_B)$.
		
		Finally, since the coreflector $\Z \colon \mathcal{C} \to \zeros$ inverts monomorphisms, the monomorphisms in $\arrc$ are levelwise monomorphisms, and the coreflector $\Z_1 \colon \arrc \to \arrz$ acts levelwise, it is clear that $\Z_1$ inverts monomorphisms, too.
	\end{proof}
	
	Moreover:
	
	\begin{prop}
		If $\zeros$ is terminal (in the sense of Definition \ref{def terminal class}), then $\arrz$ is terminal, too.
	\end{prop}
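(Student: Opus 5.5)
We have to show that every object $a \colon A \to B$ of $\arrc$ has an arrow $v_a \colon a \to \V_1(a)$ satisfying Definition \ref{def terminal class} relative to $\arrz$. We build it levelwise from the functor $\V \colon \op{\mathcal{C}} \to \zeros$, using only posetality of $\zeros$ and the universal property of the arrows $v_A$.

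\textbf{Construction.} Given $a \colon A \to B$, take the codomain component to be $v_B \colon B \to \V(B)$. Take the domain component to be $v_A \colon A \to \V(A)$. Define the object
\[ \V_1(a) \coloneqq \V(a)^{\ast} \colon \V(A) \to \V(B) \]
to be the unique arrow obtained from the universal property of $v_A$ applied to $v_B a \colon A \to \V(B)$, so that $\V(a)^{\ast} v_A = v_B a$. (Note that $\V(a)$ as defined in the paper goes from $\V(B)$ to $\V(A)$, which is the other direction.) The arrow $\V(a)^{\ast}$ lies in $\arrz$, and the pair $v_a \coloneqq (v_A, v_B)$ is a morphism $a \to \V_1(a)$ in $\arrc$ precisely because of the equation defining $\V(a)^{\ast}$.

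\textbf{Universal property.} Let $(\chi_1, \chi_2) \colon a \to z$ be a morphism in $\arrc$, where $z \colon Z_1 \to Z_2$ is in $\arrz$, so that $z\chi_1 = \chi_2 a$.
\begin{itemize}
\item By terminality of $\zeros$, there are arrows $\varphi_1 \colon Z_1 \to \V(A)$ with $\varphi_1 \chi_1 = v_A$, and $\varphi_2 \colon Z_2 \to \V(B)$ with $\varphi_2 \chi_2 = v_B$.
\item The square $\V(a)^{\ast}\varphi_1 = \varphi_2 z$ commutes automatically: both sides are arrows $Z_1 \to \V(B)$ between objects of $\zeros$, and $\zeros$ is posetal.
\item Hence $(\varphi_1, \varphi_2) \colon z \to \V_1(a)$ is a morphism in $\arrc$, and $(\varphi_1,\varphi_2)(\chi_1,\chi_2) = (v_A, v_B) = v_a$.
\item Uniqueness is immediate because $\arrz$ is posetal, as shown in the previous proposition.
\end{itemize}

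\textbf{Main obstacle.} The only real point is that the codomain of $v_a$ must be an arrow of $\zeros$, which requires defining $\V(A) \to \V(B)$. This arrow does not come directly from functoriality of $\V$, since $\V$ is contravariant. It has to be obtained from the universal property of $v_A$ applied to $v_B a$, as above. Everything else follows from posetality of $\zeros$.
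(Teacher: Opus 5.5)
There is a genuine gap, and it sits at the step you flag as the main obstacle. Condition (b) of Definition \ref{def terminal class} makes $v_A$ \emph{terminal} among arrows from $A$ into objects of $\zeros$, so it only produces arrows \emph{into} $\V(A)$. Applied to $v_B a \colon A \to \V(B)$, it gives the unique $\varphi \colon \V(B) \to \V(A)$ with $\varphi v_B a = v_A$. That arrow is exactly the paper's $\V(a)$, and the property says nothing about arrows $\V(A) \to \V(B)$. So, as written, nothing justifies the existence of $\V(a)^{\ast}$ with $\V(a)^{\ast} v_A = v_B a$. Consequently nothing justifies that $(v_A, v_B)$ is a morphism $a \to \V_1(a)$ in $\arrc$. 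In effect you are reading $v_A$ as a reflection of $A$ into $\zeros$, which the definition does not say.

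The step can be repaired, and then the rest of your argument goes through unchanged. Apply (b) for $B$, not $A$, to $\V(a) v_B \colon B \to \V(A)$. This gives $\rho \colon \V(A) \to \V(B)$ with $\rho \V(a) v_B = v_B$, and hence $\rho v_A = \rho \V(a) v_B a = v_B a$. So $\V(a)^{\ast} \coloneqq \rho$ works. By posetality $\rho = \V(a)^{-1}$, so in fact $\V$ sends every arrow to an isomorphism.

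The paper avoids this extra argument by choosing a different but isomorphic universal arrow. It takes $\Hv(a) = 1_{\V(A)}$ with components $(v_A, \V(a) v_B)$, which involves only arrows into $\V(A)$. The cost is an extra check: the induced codomain component $\psi \colon Z_2 \to \V(A)$ must satisfy $\psi \chi_2 = \V(a) v_B$. The paper gets this by writing $\psi = \V(a)\theta$, where $\theta \chi_2 = v_B$. Once fixed, your version has the more natural universal arrow $(v_A, v_B)$, and both the commuting square and the factorization become immediate.
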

	
	\begin{proof}
		Given an object $f \colon A \to B$ in $\arrc$, consider the diagram
		\[\begin{tikzcd}
			A && {\V(A)} \\
			& {Z_1} \\
			B && {\V(A)} \\
			& {Z_2}
			\arrow["{v_A}", from=1-1, to=1-3]
			\arrow["{\chi_1}"', from=1-1, to=2-2]
			\arrow["f"', from=1-1, to=3-1]
			\arrow[equal, from=1-3, to=3-3]
			\arrow["\varphi"', from=2-2, to=1-3]
			\arrow["z"'{pos=0.2}, from=2-2, to=4-2]
			\arrow["{\V(f) v_B}"'{pos=0.3}, from=3-1, to=3-3]
			\arrow["{\chi_2}"', from=3-1, to=4-2]
			\arrow["\psi"', from=4-2, to=3-3]
		\end{tikzcd}\]
		where $(\chi_1, \chi_2) \colon f \to z$ with $z \in \arrz$, $\varphi$ is the unique morphism in $\mathcal{C}$ such that $\varphi \chi_1 = v_A$, and $\psi$ is the unique morphism in $\mathcal{C}$ such that $\psi \chi_2 f = v_A$ (both $\varphi$ and $\psi$ are induced by the universal property of $v_A$). Clearly $(\varphi, \psi)$ is a morphism in $\arrc$. We define $\Hv(f) \coloneqq 1_{\V(A)}$. In order to show that this assignment makes $\arrz$ a terminal class of zero objects, we must prove that the bottom triangle of the diagram above commutes, i.e., that $\psi \chi_2 = \V(f) v_B$. By the universal property of $v_B$, there exists a unique $\theta$ such that $\theta \chi_2 = v_B$. Thus, we obtain the following commutative diagram (the right part commutes since there is at most one arrow between zero objects):
		\[\begin{tikzcd}
			B && {Z_2} & {\V(A)} \\
			& {\V(B)} \\
			& {\V(A).}
			\arrow["{\chi_2}", from=1-1, to=1-3]
			\arrow["{v_B}"', from=1-1, to=2-2]
			\arrow["\psi", from=1-3, to=1-4]
			\arrow["\theta", from=1-3, to=2-2]
			\arrow["{\V(f)}"', from=2-2, to=3-2]
			\arrow[equal, from=3-2, to=1-4]
		\end{tikzcd}\]
		Consequently, $\psi \chi_2 = \V(f) \theta \chi_2 = \V(f) v_B$.
	\end{proof}
	
	Now we are ready to state the main result of this section:
	
	\begin{prop}
		Let $\mathcal{C}$ be a regular protomodular category with a class $\zeros$ of zero objects. Consider a torsion theory $\torsione$ in $\mathcal{C}$ satisfying Conditions (N) and (M), and the corresponding factorization system $(\ce, \cm)$. Then the pair of full subcategories $(\tort_1, \torf_1)$, where
		\[ \torf_1 = \{ m \in \arrc \,|\, K[m] \in \torf \} \]
		and
		\[ \tort_1 = \{ g \in \arrc \,|\, g \colon T \to Z, T \in \tort, Z \in \zeros \} \]
		is an $\arrz$-torsion theory in the category $\mathrm{Arr}(\mathcal{C})$.
	\end{prop}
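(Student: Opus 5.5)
The plan is to check the three axioms of a $\zeros$-torsion theory for $\arrz$ in $\arrc$, using the coreflector $\Z_1$ from the proposition above, whose counit is computed levelwise. First I record an elementary fact. If $g \colon T \to Z$ has $Z \in \zeros$, then $g \in \zideal$, so the $\zeros$-kernel of $g$ is $1_T$. Hence $K[g] = T$.

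\textbf{Axiom (a).} Let $g \colon T \to Z$ lie in $\tort_1 \cap \torf_1$. Then $K[g] = T$ lies in $\torf$, so $T \in \tort \cap \torf = \zeros$, and therefore $g \in \arrz$. Conversely, let $z \colon Z_1 \to Z_2$ lie in $\arrz$. Then $Z_1 \in \zeros \subseteq \tort$, so $z \in \tort_1$. Also $K[z] = Z_1 \in \zeros \subseteq \torf$, so $z \in \torf_1$.

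\textbf{Axiom (b).} Let $(x_1,x_2) \colon g \to m$ be a morphism with $g \colon T \to Z$ in $\tort_1$ and $m \colon A \to B$ in $\torf_1$.
\begin{itemize}
\item Since $m x_1 = x_2 g$ factors through $Z$, the arrow $x_1$ factors through $k = \Zker(m)$.
\item The resulting arrow $T \to K[m]$ goes from an object of $\tort$ to one of $\torf$, so it lies in $\zideal$. Hence $x_1$ factors through a zero object, and by coreflectivity $x_1 = \varepsilon_A u$ for a unique $u \colon T \to \Z(A)$.
\item Similarly $x_2 = \varepsilon_B w$ with $w \colon Z \to \Z(B)$.
\item Since $\varepsilon_B$ is a monomorphism, the identity $\varepsilon_B \Z(m) u = m x_1 = x_2 g = \varepsilon_B w g$ gives $\Z(m) u = w g$.
\end{itemize}
So $(x_1,x_2)$ factors through $\Z_1(m) \in \arrz$ via $(u,w)$, as required.

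\textbf{Axiom (c).} For $m \colon A \to B$, I take the $(\ce,\cm)$-factorization $m = \mu e$ given by Proposition \ref{Da torsione a sistema}, with $e \colon A \to M$ in $\ce$ and $\mu \colon M \to B$ in $\cm$. Then $\F_1(m) \coloneqq \mu$, which lies in $\torf_1$ by the definition of $\cm$.
\begin{itemize}
\item I expect, as in the proof of that proposition using Conditions (N) and (M), that $K[e] \cong \T(K[m])$ with $\Zker(e) = k\, t_{K[m]}$. If this is not stated explicitly, I will prove it. Since $e$ is the $\zeros$-cokernel of its kernel, $K[e] \in \tort$, and $K[e] \to K[m]$ is the torsion part of $K[m]$ because $K[\mu] \in \torf$.
\item The composite $m k t_{K[m]}$ lies in $\zideal$, so it factors uniquely as $\varepsilon_B \tau$ with $\tau \colon \T(K[m]) \to \Z(B)$. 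I set $\T_1(m) \coloneqq \tau$, which lies in $\tort_1$.
\item The candidate sequence is
\[ \tau \xrightarrow{(k t_{K[m]},\, \varepsilon_B)} m \xrightarrow{(e,\, 1_B)} \mu . \]
\end{itemize}

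\textbf{Main obstacle: exactness in $\arrc$.} I will show directly that an arrow of $\arrc$ lies in the ideal determined by $\arrz$ exactly when both components lie in $\zideal$ and they are compatible through $\Z_1$, using the same argument as in (b).
\begin{itemize}
\item \emph{Kernel.} Let $(y_1,y_2) \colon n \to m$ with $(e,1_B)(y_1,y_2) \in \mathcal{N}_{\arrz}$. Then $e y_1 \in \zideal$ and $y_2 \in \zideal$, so $y_1$ factors through $k t_{K[m]}$ and $y_2$ factors through $\varepsilon_B$. Commutativity of the induced square follows from $\varepsilon_B$ being monic.
\item \emph{Cokernel.} Let $(y_1,y_2) \colon m \to n$ kill $(k t_{K[m]},\varepsilon_B)$. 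Then $y_1$ factors uniquely through $e$, since $e$ is the $\zeros$-cokernel of $k t_{K[m]}$. Then $y_2$ must serve as the second component, because $e$ is epic.
\end{itemize}
The delicate points are the second component of the cokernel, i.e.\ that $y_2 \mu$ agrees with the induced arrow composed with $n$, which is checked by precomposing with the epimorphism $e$, and ensuring that killing $\varepsilon_B$ imposes no extra constraint.
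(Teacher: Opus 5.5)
Your proof is correct. For axiom (c) it is the paper's argument: you use the same sequence $\tau \xrightarrow{(k t_{K[m]},\,\varepsilon_B)} m \xrightarrow{(e,\,1_B)} \mu$, and your $\tau$ is the paper's $\Z(m)t$ (in the paper's notation $f=me$), since $\varepsilon_B$ is monic. What you add is a proof of two points the paper takes for granted.
\begin{itemize}
\item The first is $\Zker(e)=k\,t_{K[m]}$. The paper reads this off the explicit construction of the factorization in the earlier paper. Your two-inclusion argument derives it from the abstract properties of $(\ce,\cm)$: $K[e]$ is a torsion subobject of $K[m]$, and conversely $e k t_{K[m]}$ factors through $K[\mu]\in\torf$ via an arrow out of a torsion object, hence lies in $\zideal$. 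The argument is sound.
\item The second is that an arrow of $\arrc$ lies in $\mathcal{N}_{\arrz}$ if and only if both components lie in $\zideal$. Compatibility is automatic because the $\varepsilon$'s are monic. This is what justifies the paper's remark that $\arrz$-kernels and $\arrz$-cokernels are computed levelwise.
\end{itemize}

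For (a) and (b) your route genuinely differs. The paper works through $(\ce,\cm)$. In (a) it uses essential uniqueness of the factorization of $g$. In (b) it factors $t=ze$ with $e$ landing in a zero object (again taken from the construction), and uses the diagonal fill-in of $e\in\ce$ against $m\in\cm$. You instead note that $K[g]=T$ whenever the codomain of $g$ is a zero object. In (b) you push $x_1$ through $\Zker(m)$, invoke axiom (b) of $\torsione$ and the coreflectivity of $\zeros$, and factor $(x_1,x_2)$ through $\Z_1(m)$. Your version is more elementary. It shows that (a) and (b) hold for $(\tort_1,\torf_1)$ for any torsion theory, with Conditions (N) and (M) needed only to produce the factorization in (c). The paper's version is shorter once the factorization system is available.

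One wording point: in the cokernel step, the second component is forced to equal $y_2$ by the identity $1_B$, not by $e$ being epic. Epicness of $e$ is what makes the pair consisting of the induced arrow and $y_2$ a morphism $\mu\to n$, as your final sentence correctly says.
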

	
	\begin{proof}
		Let us first prove that $\tort_1 \cap \torf_1 = \arrz$. The inclusion $\arrz \subseteq \tort_1 \cap \torf_1$ follows immediately from the definition of $\tort_1$ and $\torf_1$. For the other inclusion, take $g \colon T \to Z$ in $\tort_1 \cap \torf_1$ and consider the $(\ce,\cm)$-factorization of $g$:
		\[\begin{tikzcd}
			T && Z \\
			& X.
			\arrow["{g \in \cm}", from=1-1, to=1-3]
			\arrow["{e \in \ce}"', from=1-1, to=2-2]
			\arrow["{m \in \cm}"', from=2-2, to=1-3]
		\end{tikzcd}\]
		Since $m \in \cm$, we get $X \cong K[m] \in \torf$. Moreover, due to the essential uniqueness of the factorization, $e$ is an isomorphism. So $T \cong X \in \torf \cap \tort = \zeros$, and hence $g \in \arrz$.
		
		Consider now an arrow $(a,b) \colon t \to m$, with $t \colon T \to Z$ in $\tort_1$ and $m \colon A \to B$ in $\torf_1$. We take the $(\ce,\cm)$-factorization $t=ze$ of $t$, where $e \colon T \to Z'$ and $z \colon Z' \to Z$. The fact that the codomain of $e$ is a zero object follows from the way the $(\ce,\cm)$-factorization is obtained (see the proof of Proposition $3.5$ in \cite{CM torsion theories 1}). Observe that in the commutative square
		\[\begin{tikzcd}
			T & {Z'} \\
			A & B
			\arrow["e", from=1-1, to=1-2]
			\arrow["a"', from=1-1, to=2-1]
			\arrow["d"', from=1-2, to=2-1]
			\arrow["bz", from=1-2, to=2-2]
			\arrow["m"', from=2-1, to=2-2]
		\end{tikzcd}\]
		the existence of the diagonal $d$ is guaranteed by the fact that $(\ce,\cm)$ is a factorization system. Therefore, $(a,b)$ factors through $\arrz$ as $(a,b) \colon t \xrightarrow{(e,1_Z)} z \xrightarrow{(d,b)} m$.
		
		Finally, let us show that, for any $f \in \arrc$, there exists a short $\arrz$-exact sequence whose middle object is $f$, whose $\arrz$-kernel is in $\tort_1$ and whose $\arrz$-cokernel is in $\torf_1$. As shown in \cite[Proposition $3.5$]{CM torsion theories 1}, the $(\ce,\cm)$-factorization of $f$ is given by:
		\[\begin{tikzcd}
			{\T(K)} & K & A && B \\
			&& {\F(K)} & C
			\arrow["{t_K}", tail, from=1-1, to=1-2]
			\arrow["k", tail, from=1-2, to=1-3]
			\arrow["{\eta_K}"', from=1-2, to=2-3]
			\arrow["f", from=1-3, to=1-5]
			\arrow["e"', from=1-3, to=2-4]
			\arrow["{k'}"', tail, from=2-3, to=2-4]
			\arrow["m"', from=2-4, to=1-5]
		\end{tikzcd}\]
		where $e=\Zcoker(kt_K)$, $kt_K=\Zker(e)$, and $k'=\Zker(m)$. Observe that, since $K[e]=\T(K) \in \tort$, the unique arrow from $\T(K) \to \Z(B)$ is $\Z(m)t$, where $t$ is defined by the pullback
		\[\begin{tikzcd}
			{K[e]} & {\Z(C)} \\
			A & C.
			\arrow["t", from=1-1, to=1-2]
			\arrow["{kt_K}"', tail, from=1-1, to=2-1]
			\arrow["\lrcorner"{anchor=center, pos=0.125}, draw=none, from=1-1, to=2-2]
			\arrow["{\varepsilon_C}", tail, from=1-2, to=2-2]
			\arrow["e"', from=2-1, to=2-2]
		\end{tikzcd}\]
		So, $fkt_K=\varepsilon_B\Z(m)t$. Hence, the following diagram is commutative:
		\[\begin{tikzcd}
			{K[e]} & A & C \\
			{\Z(C)} \\
			{\Z(B)} & B & B.
			\arrow["{kt_K}", tail, from=1-1, to=1-2]
			\arrow["t"', from=1-1, to=2-1]
			\arrow["e", from=1-2, to=1-3]
			\arrow["f", from=1-2, to=3-2]
			\arrow["m", from=1-3, to=3-3]
			\arrow["{\Z(m)}"', from=2-1, to=3-1]
			\arrow["{\varepsilon_B}"', tail, from=3-1, to=3-2]
			\arrow[equal, from=3-2, to=3-3]
		\end{tikzcd}\]
		Moreover, we have that $\Z(m)t \in \tort_1$ and $m \in \torf_1$. Since, in $\arrc$, $\arrz$-kernels and $\arrz$-cokernels are computed levelwise as in $\mathcal{C}$, we get that $(kt_K,\varepsilon_B)=\ArrZker(e,1_B)$ and $(e,1_B)=\ArrZcoker(kt_K, \varepsilon_B)$.
	\end{proof}
	
	We will denote by $\Ht \colon \arrc \to \tort_1$ and $\Hf \colon \arrc \to \torf_1$ the coreflector to the torsion part and the reflector to the torsion-free part of $(\tort_1, \torf_1)$, respectively.
	
	It is easy to verify that Conditions (M) and (S) pass from $\torsione$ to $(\tort_1, \torf_1)$:
	
	\begin{prop}
		If $\torsione$ satisfies Conditions (N) and (M), then $(\tort_1, \torf_1)$ satisfies Condition (M), too.
	\end{prop}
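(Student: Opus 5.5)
The plan is to reduce Condition (M) for $(\tort_1,\torf_1)$ to two separate applications of Condition (M) for $\torsione$: one at the domain level and one at the codomain level. Posetality of $\zeros$ then glues the two isomorphisms together.

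Take a short $\arrz$-exact sequence
\[ t \xrightarrow{(k_0,k_1)} a \xrightarrow{(q_0,q_1)} q \]
in $\arrc$, with $t\colon T_0\to Z_0$ in $\tort_1$, so that $T_0\in\tort$ and $Z_0\in\zeros$, and with $a\colon A_0\to A_1$ and $q\colon Q_0\to Q_1$. As used at the end of the proof of the previous proposition, $\arrz$-kernels and $\arrz$-cokernels are computed levelwise. Hence we obtain two short $\zeros$-exact sequences in $\mathcal{C}$:
\[ T_0 \xrightarrow{k_0} A_0 \xrightarrow{q_0} Q_0 \qquad\text{and}\qquad Z_0 \xrightarrow{k_1} A_1 \xrightarrow{q_1} Q_1 . \]
Since $T_0\in\tort$, Condition (M) gives $\F(T_0)\cong\Z(Q_0)$. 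Since $Z_0\in\zeros\subseteq\tort$, Condition (M) gives $\F(Z_0)\cong\Z(Q_1)$. Moreover $Z_0\in\torf$, so $\F(Z_0)\cong Z_0$, and therefore $Z_0\cong \Z(Q_1)$.

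Next I compute both sides of the required isomorphism $\Hf(t)\cong\Z_1(q)$. By the levelwise description of the coreflector $\Z_1$ proved earlier, $\Z_1(q)=\Z(q_0)\colon \Z(Q_0)\to\Z(Q_1)$.

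For $\Hf(t)$ I use the explicit construction in the proof of the previous proposition: $\Hf(t)$ is the $\cm$-part of the $(\ce,\cm)$-factorization of $t$. Since $Z_0\in\zeros$, every arrow into $A$ composed with $t$ lies in $\zideal$, so $K[t]=T_0$ with kernel map $1_{T_0}$. Since $T_0\in\tort$, we have $\T(T_0)=T_0$, so the $\ce$-part of $t$ is $\Zcoker(1_{T_0})=\eta_{T_0}\colon T_0\to\F(T_0)$. Hence $\Hf(t)$ is the induced arrow $\F(T_0)\to Z_0$.

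Applying Condition (M) to the trivial exact sequence $T_0\xrightarrow{1}T_0\to\F(T_0)$ gives $\F(T_0)\cong\Z(\F(T_0))$, so $\F(T_0)\in\zeros$. This is also consistent with the isomorphism $\F(T_0)\cong\Z(Q_0)$ above.

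Both $\Hf(t)$ and $\Z_1(q)$ are therefore arrows between objects of $\zeros$. We have isomorphisms $\alpha\colon\F(T_0)\cong\Z(Q_0)$ and $\beta\colon Z_0\cong\Z(Q_1)$. The square
\[ \beta\circ \Hf(t) \;=\; \Z(q_0)\circ\alpha \]
commutes automatically, because $\zeros$ is posetal and there is at most one arrow $\F(T_0)\to\Z(Q_1)$. So $(\alpha,\beta)$ is an isomorphism $\Hf(t)\cong\Z_1(q)$ in $\arrc$, which is Condition (M) for $(\tort_1,\torf_1)$.

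The step I expect to require the most care is the identification of $\Hf(t)$. This needs checking that the reflection of $t$ into $\torf_1$ is indeed the $\cm$-part of its factorization, with the factorization taken in the form recalled from \cite[Proposition 3.5]{CM torsion theories 1}. It also needs checking that $\Zker(t)=1_{T_0}$, i.e.\ that the construction there does not introduce some other kernel. The levelwise exactness of the sequence should be routine from the levelwise computation of $\arrz$-kernels and $\arrz$-cokernels.
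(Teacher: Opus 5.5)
Your proof is correct and follows the paper's route. You read off $\F(T_0)\cong\Z(Q_0)$ and $Z_0\cong\Z(Q_1)$ from the levelwise short exact rows via Condition (M), identify $\Hf(t)$ as the $\cm$-part $\F(T_0)\to Z_0$ of the factorization of $t$ (whose $\ce$-part is $\Zcoker(1_{T_0})$), and conclude by posetality of $\zeros$. There are two notational slips to fix: $\Z_1(q)$ is $\Z(q)\colon\Z(Q_0)\to\Z(Q_1)$, not $\Z(q_0)$, and the kernel argument concerns arrows into $T_0$, not into $A$.
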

	
	\begin{proof}
		By assumption, we know that, if we consider in $\mathcal{C}$ a short $\zeros$-exact sequence $T \xrightarrow{t} A \xrightarrow{q} Q$, with $T \in \tort$, then $\F(T) = \Z(Q)$. Now, consider an exact sequence in $\arrc$
		\[\begin{tikzcd}
			T & A & {Q_1} \\
			Z & B & {Q_2,}
			\arrow["{k_1}", tail, from=1-1, to=1-2]
			\arrow["t"', from=1-1, to=2-1]
			\arrow["{q_1}", from=1-2, to=1-3]
			\arrow["a"', from=1-2, to=2-2]
			\arrow["p", from=1-3, to=2-3]
			\arrow["{k_2}"', tail, from=2-1, to=2-2]
			\arrow["{q_2}"', from=2-2, to=2-3]
		\end{tikzcd}\]
		with $t \in \tort_1$. Observe that $\Z(Q_1) = \F(T)$ and $\Z(Q_2)=Z$, since the two rows are short $\zeros$-exact sequences in $\mathcal{C}$. We compute $\Hf(t)$: consider the commutative triangle
		\[\begin{tikzcd}
			T && Z \\
			& {Z'}
			\arrow["t", from=1-1, to=1-3]
			\arrow["{e \in \ce}"', from=1-1, to=2-2]
			\arrow["{m \in \cm}"', from=2-2, to=1-3]
		\end{tikzcd}\]
		given by the $(\ce,\cm)$-factorization of $t$. One has $\Hf(t)=m$. Moreover, $Z'=\Zcoker(1_T)=\F(T)$. Hence, one has $\Hf(t)=\Z(p)$.
	\end{proof}
	
	\begin{prop}
		If $\torsione$ satisfies Conditions (N), (M), and (S), then $(\tort_1, \torf_1)$ satisfies Condition (S), too.
	\end{prop}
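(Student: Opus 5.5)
We have to check that, for a pullback in $\arrc$ of the form
\[\begin{tikzcd}
	{t'} & {z'} \\
	t & z,
	\arrow["{\chi}", from=1-1, to=1-2]
	\arrow[from=1-1, to=2-1]
	\arrow[from=1-2, to=2-2]
	\arrow["{\eta_t}"', from=2-1, to=2-2]
\end{tikzcd}\]
with $t \colon T \to Z$ in $\tort_1$ and $z, z' \in \arrz$, the arrow $t'$ lies in $\tort_1$ and $\chi = \eta_{t'}$, where $\eta_t$ is the unit of the reflection onto $\torf_1$. The plan is to reduce everything to levelwise statements in $\mathcal{C}$, since pullbacks in $\arrc$ are computed levelwise and, as in the proof of Condition (M) for $(\tort_1,\torf_1)$, the unit is described explicitly.

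\emph{Step 1: describe $\eta_t$.} I would first compute the short exact sequence of $t$ in $(\tort_1,\torf_1)$ using the construction in the proof that $(\tort_1,\torf_1)$ is a torsion theory. The $(\ce,\cm)$-factorization of $t$ is $t = m e$ with $e \colon T \to Z''$, where $Z'' = \F(T)$ (this is what the computation of $\Hf(t)$ in the previous proposition gives). Since $\Z(Z'') = Z''$, the $\arrz$-cokernel of $t$ is $(e, 1_Z) \colon t \to m$, and $\Hf(t) = m$. So $\eta_t = (\eta_T, 1_Z)$, with top component the unit $\eta_T \colon T \to \F(T)$ of $\torsione$, identifying $e$ with $\eta_T$ via uniqueness of reflections.

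\emph{Step 2: compute the pullback levelwise.} Write $z' \colon Z'_1 \to Z'_2$, $z \colon \F(T) \to Z$, and $\chi = (\chi_1, \chi_2)$. The top level is a pullback in $\mathcal{C}$ of $\eta_T$ along $Z'_1 \to \F(T)$, with $T \in \tort$ and $Z'_1, \F(T) \in \zeros$. Condition (S) for $\torsione$ then gives $T' \in \tort$ and $\chi_1 = \eta_{T'}$. The bottom level is the pullback of $1_Z$ along $Z'_2 \to Z$, so $t'$ has codomain isomorphic to $Z'_2 \in \zeros$. Hence $t' \colon T' \to Z'_2$ belongs to $\tort_1$.

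\emph{Step 3: identify $\chi$ with $\eta_{t'}$.} By Step 1 applied to $t'$, we have $\eta_{t'} = (\eta_{T'}, 1_{Z'_2})$ with codomain $\Hf(t') \colon \F(T') \to Z'_2$. Since $\chi_1 = \eta_{T'}$ and $\chi_2$ is an isomorphism, which we may take to be the identity up to the canonical iso, $\chi$ and $\eta_{t'}$ coincide up to isomorphism of codomains. Because the codomain of $\chi$ is $z'$, the arrow $z'$ must agree with $\Hf(t')$. That holds automatically because both are the unique arrow $\F(T') \cong Z'_1 \to Z'_2$ in the posetal $\zeros$.

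\emph{Main obstacle.} I expect the delicate point to be Step 1: justifying carefully that the reflection unit of $t$ really has top component the unit $\eta_T$ of $\torsione$ and bottom component the identity. This requires checking that the $(\ce,\cm)$-factorization of an arrow $T \to Z$ has middle object $\F(T)$, and using the fact that (N) and (M) give $\Zcoker(1_T)=\F(T)$. Condition (M) is exactly what makes the codomain computation work.
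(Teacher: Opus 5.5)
Your proposal is correct and follows the paper's proof: identify the unit of $t \in \tort_1$ as $(\eta_T, 1_Z)$ via the $(\ce,\cm)$-factorization, compute the pullback in $\arrc$ levelwise, and apply Condition (S) for $\torsione$ to the domain-level square. Your Step 3 checks the codomain level and identifies $z'$ with $\Hf(t')$ using posetality of $\zeros$; this spells out what the paper leaves implicit in its final sentence.
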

	
	\begin{proof}
		Let $f \colon T \to Z$ be an object of $\tort_1$. Consider the $(\facte, \factm)$-factorization of $f$, whose existence is guaranteed by Proposition \ref{Da torsione a sistema}, since $\torsione$ satisfies Conditions (N) and (M):
		\[ \begin{tikzcd}
			T && Z \\
			& {\F(T),}
			\arrow["f", from=1-1, to=1-3]
			\arrow["{\eta_T \in \ce}"', from=1-1, to=2-2]
			\arrow["{z \in \cm}"', from=2-2, to=1-3]
		\end{tikzcd}\]
		where $\F(T) \in\zeros$. Observe that the $f$-component of the reflection determined by $\Hf$ is $\eta_f = (\eta_T, 1_Z)$. Let $\overline{z} \colon Z_2 \to Z_1$ be an object of $\arrz$, and consider a morphism $(z_2,z_1) \colon \overline{z} \to z$ in $\arrc$. Computing the pullback of $(\eta_T, 1_Z)$ along $(z_2,z_1)$ yields the following commutative cube:
		\[\begin{tikzcd}
			& {T'} && {Z_2} \\
			{Z_1} && {Z_1} \\
			& T && {\F(T)} \\
			Z && Z,
			\arrow["\chi"{pos=0.3}, from=1-2, to=1-4]
			\arrow["{f'}"', from=1-2, to=2-1]
			\arrow["g"{pos=0.2}, from=1-2, to=3-2]
			\arrow["\lrcorner"{anchor=center, pos=0.125}, draw=none, from=1-2, to=3-4]
			\arrow["{\overline{z}}", from=1-4, to=2-3]
			\arrow["{z_2}"{pos=0.2}, from=1-4, to=3-4]
			\arrow[equal, from=2-1, to=2-3]
			\arrow["{z_1}"'{pos=0.2}, from=2-1, to=4-1]
			\arrow["\lrcorner"{anchor=center, pos=0.125}, draw=none, from=2-1, to=4-3]
			\arrow["{z_1}"{pos=0.2}, from=2-3, to=4-3]
			\arrow["{\eta_T}"{pos=0.3}, from=3-2, to=3-4]
			\arrow["f"', from=3-2, to=4-1]
			\arrow["z", from=3-4, to=4-3]
			\arrow[equal, from=4-1, to=4-3]
		\end{tikzcd}\]
		where the front and back squares are pullback diagrams. Since $\torsione$ satisfies Condition (S), we obtain $T' \in \tort$ and $\chi = \eta_{T'}$. This implies the validity of Condition (S) in $(\tort_1, \torf_1)$.
	\end{proof}
	
	The fact that Conditions (N) and (P) pass from $\torsione$ to $(\tort_1, \torf_1)$ is much trickier. We will address this problem in the next sections.

	\section{Condition (N) for the torsion theory in the category of arrows}
	
	In the pointed case, it was proved in \cite{Gran torsione} that, if a torsion theory $\torsione$ in a homological category $\mathcal{C}$ satisfies Condition (N), so does $(\tort_1, \torf_1)$ in $\arrc$. The proof of this fact, presented in \cite{Gran torsione}, relies on the fact that, given two morphisms $f \colon A \to B$ and $g \colon A \to C$ in $\mathcal{C}$, the intersection of the kernels of $f$ and $g$ is isomorphic to the kernel of the morphism $\langle f, g \rangle \colon A \to B \times C$ induced by the universal property of the product. This fact, however, is false for $\zeros$-kernels in our context. In fact, the intersection of $\zeros$-kernels is not always a $\zeros$-kernel, and even when it is, it is not always the $\zeros$-kernel of the induced morphism. Let us show two counterexamples, the first in the category $\mathbf{Boole}$ of Boolean algebras, the second in the category $\mathbf{Ring}$ of unitary rings.
	
	\begin{example}
		In $\boole$ the $\zeros$-kernel of $\spigolose{f,g}$, in general, is different from the intersection of the $\zeros$-kernel of $f$ and the $\zeros$-kernel of $g$. Indeed, consider the four-element Boolean algebra $A \coloneq \graffe{0,x, \lnot x, 1}$ and let $f \colon A \to \Two$, $g \colon A \to \Two$ (where $\Two$ is the two-element Boolean algebra) defined by $f(x) \coloneq 0, g(x) \coloneq 1$. Let $K[\spigolose{f,g}], K[f], K[g]$ be the domains of the $\zeros$-kernel of $\spigolose{f,g}$, $f$, $g$, respectively. Observe that $K[\spigolose{f,g}]= \graffe{y \in A \,|\, (f(y),g(y))=(0,0) \text{ or } (f(y),g(y))=(1,1)} \cong \Two$, while $K[f] \cong A$ and $K[g] \cong A$. Clearly $K[\spigolose{f,g}] \neq K[f] \cap K[g]$.
	\end{example}
	
	\begin{example}
		In $\mathbf{Ring}$ the intersection of $\zeros$-kernels, in general, is not a $\zeros$-kernel. Indeed, in $\mathbf{Ring}$ the $\zeros$-kernels are precisely the subalgebras generated by the ideals. As a concrete example, consider the ring $\mathbb{Z}_2$ of integers modulo $2$. It is easy to check that, in a unitary ring of characteristic $2$, the unitary subring generated by an ideal $I$ is simply $I \cup (I+1)$. Consider, then, the quotient morphisms $\Ze_2[x] \to \Ze_2[x]/(x^2)$ and $\Ze_2[x] \to \Ze_2[x]/(x^2-1)$. Their $\zeros$-kernels are $I \cup (I+1)$ and $J \cup (J+1)$, respectively, where $I=(x^2)$ and $J=(x^2-1)$. Suppose there exists an ideal $K = (g(x)) \subseteq \Ze_2[x]$ such that:
		\[ (I \cup (I+1)) \cap (J \cup (J+1)) = K \cup (K+1). \]
		Noting that $x^2 = 1 \cdot (x^2-1) + 1$, it follows that $x^2 \in I \cap (J+1)$. Consequently, $x^2$ must belong to $K \cup (K+1)$, which implies that either $x^2 \in K$ or $x^2-1 \in K$. We analyze these two cases separately:
		\begin{enumerate}
			\item If $x^2 \in K$, then $g(x)$ must divide $x^2$ in $\Ze_2[x]$. This leads to three possibilities for $g(x)$: $x^2$, $x$, or $1$.
			\begin{itemize}
				\item If $g(x) = x^2$, then $K = I$. Then we would get $I \cup (I+1) \subseteq J \cup (J+1)$. Since $x^3 \in I$, we must have $x^3 \in J$ or $x^3 \in J+1$.
				\begin{itemize}
					\item If $x^3 \in J$, then $x^2-1 \mid x^3$, which is false.
					\item If $x^3 \in J+1$, then $x^2-1 \mid x^3-1$, which is also false, because the remainder of the division of $x^3-1$ by $x^2-1$ is $x-1 \neq 0$.
				\end{itemize}
				\item If $g(x) = x$, then $x \in K \cup (K+1)$, so $x \in I \cup (I+1)$. This is false as $x \notin (x^2)$ and $x \notin (x^2)+1$.
				\item If $g(x) = 1$, then $K \cup (K+1) = \Ze_2[x]$. But this would imply $I \cup (I+1) = \Ze_2[x]$, which is false. Indeed, $I \cup (I+1)$ is the subring of $\Ze_2[x]$ formed by the polynomials with no term of degree $1$.
			\end{itemize}
			
			\item If $x^2-1 \in K$, then $g(x)$ must divide $x^2-1$. In $\Ze_2[x]$, we have $x^2-1 = (x-1)^2$, so $g(x)$ is $x^2-1$, $x-1$, or $1$.
			\begin{itemize}
				\item If $g(x) = x^2-1$, then $K = J$, implying $J \subseteq I \cup (I+1)$. Since $x^3-x = x(x^2-1) \in J$, then $x^3-x \in I$ or $x^3-x \in I+1$.
				\begin{itemize}
					\item If $x^3-x \in I$, then $x^2 \mid x^3-x$, which is false.
					\item If $x^3-x \in I+1$, then $x^2 \mid x^3-x-1$, which is also false.
				\end{itemize}
				\item If $g(x) = x-1$, then $x-1 \in K \cup (K+1) \subseteq I \cup (I+1)$. However, $x-1$ is neither in $(x^2)$ nor in $(x^2)+1$.
				\item If $g(x) = 1$, we get a contradiction as shown in Case $1$.
			\end{itemize}
		\end{enumerate}
		Since all possibilities lead to a contradiction, no such ideal $K$ exists.
	\end{example}
	
	However, what remains true in our context is the following:
	
	\begin{prop} \label{prop torsion of inters of ker}
		Let $\torsione$ be a torsion theory in a regular category $\mathcal{C}$ with a class $\zeros$ of zero objects, and let $f \colon A \to B$ and $g \colon A \to C$ be arrows in $\mathcal{C}$. Then there is an isomorphism
		\[ \T(K[f] \cap K[g]) \cong \T(K[\langle f,g \rangle]). \]
	\end{prop}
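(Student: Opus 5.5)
Since $\T$ is right adjoint to the inclusion $\tort \hookrightarrow \mathcal{C}$, it suffices to show that the subobjects $K[f]\cap K[g]$ and $K[\langle f,g\rangle]$ of $A$ have the same maps from torsion objects: every morphism $x\colon T \to A$ with $T \in \tort$ factors through one of them if and only if it factors through the other. Both are monomorphisms into $A$, so such factorizations are unique when they exist. Then $\T(K[f]\cap K[g])$ and $\T(K[\langle f,g\rangle])$, with their composite monomorphisms into $A$, represent the same functor $\tort^{\mathrm{op}} \to \mathbf{Set}$, namely $T \mapsto \{x\colon T\to A \mid x \text{ factors through both}\}$. Yoneda in $\tort$ then gives the isomorphism.

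First I compare the two subobjects directly. If $\langle f,g\rangle e \in \zideal$, say it factors through $Z \in \zeros$, then composing with the product projections shows that $fe$ and $ge$ factor through $Z$. Hence $K[\langle f,g\rangle] \le K[f]$ and $K[\langle f,g\rangle]\le K[g]$, so
\[ K[\langle f,g\rangle] \le K[f]\cap K[g] . \]
This gives one direction of the factorization property, for all test objects and not just torsion ones.

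For the converse, let $x\colon T \to A$ with $T\in\tort$ factor through $K[f]\cap K[g]$. Then $fx \in \zideal$, so $fx$ factors through some $Z_1\in\zeros$; by coreflectivity it factors through $\varepsilon_B\colon \Z(B)\to B$. Likewise $gx$ factors through $\varepsilon_C$. Because $\zeros$ is posetal, any two maps from $T$ into zero objects need not agree, so I cannot simply pair them. Instead I use the torsion property of $T$. By (c) of Definition \ref{definizione preesatta}, the identity of $T$ has $\zeros$-cokernel $\eta_T\colon T \to \F(T)$, and I claim $\F(T)\in\zeros$. Indeed $\F(T)\in\torf$, and since $\eta_T$ is an epimorphism, $\F(T)$ is a quotient of a torsion object. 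Alternatively, $T \to \F(T)$ has $\zeros$-kernel $1_T$, so $\eta_T\in\zideal$; then $\eta_T$ factors through some $Z$, and $\F(T)$ is a retract of $Z$ via the reflection. As a retract of an object of a posetal coreflective class, $\F(T)\cong\Z(\F(T))\in\zeros$.

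Next, any map $u\colon T \to Z$ with $Z\in\zeros$ lies in $\zideal$ and factors through $\eta_T$, because $Z\in\zeros\subseteq\torf$ and $\F$ is the reflector. Therefore $fx$ and $gx$ both factor through the single map $\eta_T\colon T\to\F(T)$, say $fx=\alpha\eta_T$ and $gx=\beta\eta_T$. Then
\[ \langle f,g\rangle x=\langle\alpha,\beta\rangle\eta_T , \]
and $\langle\alpha,\beta\rangle$ is a map out of $\F(T)\in\zeros$. So $\langle f,g\rangle x\in\zideal$, and $x$ factors through $K[\langle f,g\rangle]$.

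The main obstacle is this last step, showing that maps from a torsion object into zero objects factor through a common zero object, namely showing $\F(T)\in\zeros$ cleanly. If the retract argument runs into trouble, I would fall back on the factorization of $fx$ through $\varepsilon_B$. Via the reflection, the map $T\to\Z(B)$ factors through $\F(T)$ because $\Z(B)\in\torf$. It then remains to check $\F(T)\in\tort$, using $\tort\cap\torf=\zeros$ and the fact that $\tort$ is closed under the regular quotient $\eta_T$, which holds since $\Hom(T,F)\subseteq\zideal$ characterizes $\tort$.
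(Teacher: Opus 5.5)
Your proof is correct, but it takes a different route from the paper's. The paper works inside $\tort$. It uses that $\torf$ is closed under products, so that $\T(Z_1\times Z_2)\in\zeros$, which gives an arrow $\theta\colon \T(Z_1\times Z_2)\to \Z(B\times C)$. It then applies $\T$, which preserves limits, to the pullback defining $K[\langle f,g\rangle]$, and builds an explicit $\psi\colon \T(K[f]\cap K[g])\to \T(K[\langle f,g\rangle])$ through the resulting pullback in $\tort$. Using that there is at most one arrow from a torsion object to a zero object, it checks that $\psi$ is a split epimorphism and a monomorphism.

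You instead reduce, via the coreflection and Yoneda in $\tort$, to showing that the inclusion $K[\langle f,g\rangle]\le K[f]\cap K[g]$ is invisible to torsion objects. Your only real input is that, for $T\in\tort$, every arrow from $T$ into a zero object factors through the single arrow $\eta_T\colon T\to\F(T)$ with $\F(T)\in\zeros$. This lets you route $fx$ and $gx$ through a common zero object instead of through $Z_1\times Z_2$.

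Both proofs exploit the same phenomenon: torsion objects cannot tell $Z_1\times Z_2$ apart from a zero object. The paper encodes it as $\T(Z_1\times Z_2)\in\zeros$, you as $\F(T)\in\zeros$. Your version needs neither limit preservation by $\T$ nor closure of $\torf$ under products, avoids the diagram chase, and explains conceptually why the isomorphism holds. The paper's version has the merit of exhibiting the inverse arrow explicitly.

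Two small remarks. Your first justification of $\F(T)\in\zeros$ (being a quotient of a torsion object) and your closing fallback (closure of $\tort$ under regular quotients) would each need a separate argument. Neither is needed, because the retract argument is already complete. Write the retraction $a\colon Z\to\F(T)$ as $\varepsilon_{\F(T)}a'$; then $ac=1$ makes $\varepsilon_{\F(T)}$ a split epimorphism, and posetality of $\zeros$ forces $a'c\,\varepsilon_{\F(T)}=1_{\Z(\F(T))}$. Also, the obstruction to pairing the two factorizations is not posetality but the fact that $Z_1\times Z_2$ need not lie in $\zeros$.
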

	
	\begin{proof}
		Consider the pullback diagrams:
		\begin{equation*}
			\begin{tikzcd}
				{K_1} & {Z_1} & {K_2} & {Z_2} \\
				A & B & A & C \\
				{K_1 \cap K_2} & {K_2} & K & Z \\
				{K_1} & A & A & {B \times C}
				\arrow["{\chi_1}", from=1-1, to=1-2]
				\arrow["{k_1}"', tail, from=1-1, to=2-1]
				\arrow["\lrcorner"{anchor=center, pos=0.125}, draw=none, from=1-1, to=2-2]
				\arrow["{\varepsilon_B}", tail, from=1-2, to=2-2]
				\arrow["{\chi_2}", from=1-3, to=1-4]
				\arrow["{k_2}"', tail, from=1-3, to=2-3]
				\arrow["\lrcorner"{anchor=center, pos=0.125}, draw=none, from=1-3, to=2-4]
				\arrow["{\varepsilon_C}", tail, from=1-4, to=2-4]
				\arrow["f"', from=2-1, to=2-2]
				\arrow["g"', from=2-3, to=2-4]
				\arrow["{i_2}", tail, from=3-1, to=3-2]
				\arrow["{i_1}"', tail, from=3-1, to=4-1]
				\arrow["\lrcorner"{anchor=center, pos=0.125}, draw=none, from=3-1, to=4-2]
				\arrow["{k_2}", tail, from=3-2, to=4-2]
				\arrow["{\chi}", from=3-3, to=3-4]
				\arrow["k"', tail, from=3-3, to=4-3]
				\arrow["\lrcorner"{anchor=center, pos=0.125}, draw=none, from=3-3, to=4-4]
				\arrow["{\varepsilon_{B \times C}}", tail, from=3-4, to=4-4]
				\arrow["{k_1}"', tail, from=4-1, to=4-2]
				\arrow["{\spigolose{f,g}}"', from=4-3, to=4-4]
			\end{tikzcd}
		\end{equation*}
		and let $\pi_B \colon B \times C \to B$ and $\pi_C \colon B \times C \to C$ be the product projections. Since $\varepsilon$ is a natural transformation, we get $\varepsilon_B \Z(\pi_B) = \pi_B \varepsilon_{B \times C}$ and $\varepsilon_C \Z(\pi_C) = \pi_C \varepsilon_{B \times C}$. Observe that $\varepsilon_B \Z(\pi_B) \chi = \pi_B \varepsilon_{B \times C} \chi = \pi_B \spigolose{f,g} k = f k$; so, there exists a unique $\varphi_1 \colon K \to K_1$ such that $k_1 \varphi_1 = k$ and $\chi_1 \varphi_1 = \Z(\pi_B) \chi$. Similarly, there exists a unique $\varphi_2 \colon K \to K_2$ such that $k_2 \varphi_2 = k$ and $\chi_2 \varphi_2 = \Z(\pi_C) \chi$:
		\[\begin{tikzcd}
			K &&& K && \\
			& {K_1} & {Z_1} && {K_2} & {Z_2} \\
			& A & B && A & {C.}
			\arrow["{\varphi_1}"', from=1-1, to=2-2]
			\arrow["{\Z(\pi_B) \chi}", curve={height=-12pt}, from=1-1, to=2-3]
			\arrow["k"', curve={height=12pt}, from=1-1, to=3-2]
			\arrow["{\varphi_2}"', from=1-4, to=2-5]
			\arrow["{\Z(\pi_C)\chi}", curve={height=-12pt}, from=1-4, to=2-6]
			\arrow["k"', curve={height=12pt}, from=1-4, to=3-5]
			\arrow["{\chi_1}", from=2-2, to=2-3]
			\arrow["{k_1}"', tail, from=2-2, to=3-2]
			\arrow["\lrcorner"{anchor=center, pos=0.125}, draw=none, from=2-2, to=3-3]
			\arrow["{(A)}"{description}, draw=none, from=2-2, to=3-3]
			\arrow["{\varepsilon_B}", tail, from=2-3, to=3-3]
			\arrow["{\chi_2}", from=2-5, to=2-6]
			\arrow["{k_2}"', tail, from=2-5, to=3-5]
			\arrow["\lrcorner"{anchor=center, pos=0.125}, draw=none, from=2-5, to=3-6]
			\arrow["{(B)}"{description}, draw=none, from=2-5, to=3-6]
			\arrow["{\varepsilon_C}", tail, from=2-6, to=3-6]
			\arrow["f"', from=3-2, to=3-3]
			\arrow["g"', from=3-5, to=3-6]
		\end{tikzcd}\]
		Since $k_1 \varphi_1 = k_2 \varphi_2 = k$, there exists a unique $\varphi \colon K \to K_1 \cap K_2$ such that $i_j \varphi = \varphi_j$ for $j \in \{1,2\}$:
		\[\begin{tikzcd}
			K && \\
			& {K_1 \cap K_2} & {K_2} \\
			& {K_1} & A.
			\arrow["\varphi"', from=1-1, to=2-2]
			\arrow["{\varphi_2}", curve={height=-12pt}, from=1-1, to=2-3]
			\arrow["{\varphi_1}"', curve={height=12pt}, from=1-1, to=3-2]
			\arrow["{i_2}", tail, from=2-2, to=2-3]
			\arrow["{i_1}"', tail, from=2-2, to=3-2]
			\arrow["\lrcorner"{anchor=center, pos=0.125}, draw=none, from=2-2, to=3-3]
			\arrow["{k_2}", tail, from=2-3, to=3-3]
			\arrow["{k_1}"', tail, from=3-2, to=3-3]
		\end{tikzcd}\]
		Our goal now is to define the inverse of $\T(\varphi)$. Let $t_{B \times C} \colon \T(B \times C) \to B \times C$ denote the $(B \times C)$-component of the counit of the coreflection of $\tort$ in $\mathcal{C}$. Since $\torf$ is a reflective subcategory of $\mathcal{C}$, hence it is closed under products, and $\zeros \subseteq \torf$, we have $Z_1 \times Z_2 \in \torf$, so that $\T(Z_1 \times Z_2) \in \zeros$. Thanks to this, there exists a unique morphism $\theta$ making the following diagram commute:
		\[\begin{tikzcd}[column sep=1.5cm]
			{\T(Z_1 \times Z_2)} & {\T(B \times C)} & {B \times C} \\
			Z.
			\arrow["{\T(\varepsilon_B \times \varepsilon_C)}", from=1-1, to=1-2]
			\arrow["\theta"', from=1-1, to=2-1]
			\arrow["{t_{B \times C}}", from=1-2, to=1-3]
			\arrow["{\varepsilon_{B \times C}}"', tail, from=2-1, to=1-3]
		\end{tikzcd}\]
		Applying $\T$ to the diagram above (and recalling that, up to isomorphisms, $\T(t_{B \times C}) = 1_{\T(B \times C)}$, $\T(\T(\varepsilon_B \times \varepsilon_C)) = \T(\varepsilon_B \times \varepsilon_C)$, and $\T(\theta) = \theta$), we get $\T(\varepsilon_{B \times C}) \theta = \T(\varepsilon_B \times \varepsilon_C)$. Now, applying $\T$ to the arrow $\langle \chi_1 i_1, \chi_2 i_2 \rangle \colon K_1 \cap K_2 \to Z_1 \times Z_2$ and composing with $\theta$, we obtain $\theta \T(\langle \chi_1 i_1, \chi_2 i_2 \rangle) \colon \T(K_1 \cap K_2) \to Z$. Consider the following diagram in $\tort$, where the inner square is a pullback (since $\T$ preserves limits):
		\[\begin{tikzcd}
			{\T(K)} &&& \\
			& {\T(K_1 \cap K_2)} \\
			&& {\T(K)} & {\T(Z)=Z} \\
			&& {\T(A)} & {\T(B \times C).}
			\arrow["{\T(\varphi)}", from=1-1, to=2-2]
			\arrow[""{name=0, anchor=center, inner sep=0}, "{\T(\chi)}", curve={height=-24pt}, from=1-1, to=3-4]
			\arrow[""{name=1, anchor=center, inner sep=0}, "{\T(k)}"', curve={height=24pt}, from=1-1, to=4-3]
			\arrow["\psi"{description}, from=2-2, to=3-3]
			\arrow["{\theta \T(\spigolose{\chi_1i_1, \chi_2i_2})}"{description}, curve={height=-12pt}, from=2-2, to=3-4]
			\arrow["{\T(k_1i_1)=\T(k_2i_2)}"{description}, curve={height=12pt}, from=2-2, to=4-3]
			\arrow["{\T(\chi)}", from=3-3, to=3-4]
			\arrow["{\T(k)}"', from=3-3, to=4-3]
			\arrow["\lrcorner"{anchor=center, pos=0.125}, draw=none, from=3-3, to=4-4]
			\arrow["{\T(\varepsilon_{B \times C})}", from=3-4, to=4-4]
			\arrow["{\T(\spigolose{f,g})}"', from=4-3, to=4-4]
			\arrow["(\clubsuit)"{description}, draw=none, from=0, to=2-2]
			\arrow["(\spadesuit)"{description}, draw=none, from=1, to=2-2]
		\end{tikzcd}\]
		The existence of $\psi$ is guaranteed since:
		\begin{align*}
			\T(\varepsilon_{B \times C}) \theta \T(\spigolose{\chi_1 i_1, \chi_2 i_2}) &= \T(\varepsilon_B \times \varepsilon_C) \T(\spigolose{\chi_1 i_1, \chi_2 i_2}) \\
			&= \T(\spigolose{\varepsilon_B \chi_1 i_1, \varepsilon_C \chi_2 i_2}) \\
			&= \T(\spigolose{f k_1 i_1, g k_2 i_2}) \\
			&= \T(\spigolose{f,g}) \T(k_1 i_1) \quad \text{(because $k_1 i_1 = k_2 i_2$).}
		\end{align*}
		We now prove that the outer square commutes. We must show that $\T(\spigolose{f,g}) \T(k) = \T(\varepsilon_{B \times C}) \T(\chi)$. Recalling that $\pi_B \varepsilon_{B \times C} = \varepsilon_B \Z(\pi_B)$, we obtain $\T(\pi_B) \T(\varepsilon_{B \times C}) = \T(\varepsilon_B \Z(\pi_B))$. Similarly, one gets $\T(\pi_C) \T(\varepsilon_{B \times C}) = \T(\varepsilon_C \Z(\pi_C))$. Since $\T$ preserves products, it suffices to show that $\T(f k) = \T(\varepsilon_B \Z(\pi_B) \chi)$ and $\T(g k) = \T(\varepsilon_C \Z(\pi_C) \chi)$. However, these last identities hold even before applying $\T$, as observed at the beginning of the proof. Moreover, $(\clubsuit)$ commutes because, for every $T \in \tort$ and for every $Z \in \zeros$, the set $\Hom(T,Z)$ has at most one element. Furthermore, $(\spadesuit)$ commutes since $k_1 i_1 \varphi = k_1 \varphi_1 = k$. Therefore, by the universal property of the inner pullback diagram, $\psi \T(\varphi) = 1_{\T(K)}$. The arrow $k_1 i_1$ is a monomorphism, being a composition of monomorphisms; thus, $\T(k_1 i_1) = \T(k) \psi$ is a monomorphism in $\tort$. Since the composition $\T(k)\psi$ is a monomorphism, its first factor $\psi$ must also be a monomorphism. Having proved that $\psi$ is a split epimorphism and a monomorphism, we conclude that it is an isomorphism.
	\end{proof}
	
	The result we just proved allows us to show that Condition (N) passes to the torsion theory in $\arrc$:
	
	\begin{prop}
		If $\torsione$ satisfies Conditions (N) and (M), then $(\tort_1, \torf_1)$ satisfies Condition (N), too.
	\end{prop}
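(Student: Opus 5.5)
We verify Condition (N) directly in $\arrc$, computing every $\arrz$-kernel levelwise and using the description of $\Ht$ from the proof that $(\tort_1,\torf_1)$ is a torsion theory.

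\textbf{Computing the composite in (N).} Let $(f_1,f_2)\colon a\to b$ be an arrow of $\arrc$, where $a\colon A_1\to A_2$ and $b\colon B_1\to B_2$. Its $\arrz$-kernel is $(k_1,k_2)\colon \kappa\to a$, where $k_i=\Zker(f_i)$ and $\kappa\colon K[f_1]\to K[f_2]$ is the induced arrow, so $k_2\kappa=ak_1$. By the proof that $(\tort_1,\torf_1)$ is a torsion theory, the counit at an object $x\colon X\to Y$ is $(k_xt_{K[x]},\varepsilon_Y)$, where $k_x=\Zker(x)$. Here the domain of $\Ht(x)$ is $\T(K[x])$ and its codomain is $\Z(Y)$. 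Hence the composite we must show is an $\arrz$-kernel is
\[
(k_1 k_\kappa t_{K[\kappa]},\; k_2\varepsilon_{K[f_2]}),\qquad k_\kappa=\Zker(\kappa).
\]
Since $k_2$ is a monomorphism and $\Z$ inverts monomorphisms, $\Z(K[f_2])\cong\Z(A_2)$. So the second component is just $\varepsilon_{A_2}$, which is the $\zeros$-kernel of $1_{A_2}$ (pull $\varepsilon_{A_2}$ back along $1_{A_2}$).

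\textbf{Identifying the first component.} I will show that $k_1k_\kappa\colon K[\kappa]\to A_1$ is the intersection $K[f_1]\cap K[a]$. A generalized element $x$ of $K[f_1]$ is killed by $\kappa$ exactly when $k_2\kappa x=ak_1x$ factors through $\zeros$, because $k_2$ is a monomorphism that reflects the zero part. Proposition \ref{prop torsion of inters of ker} then gives
\[
\T(K[\kappa])\cong\T(K[f_1]\cap K[a])\cong\T(K[\langle f_1,a\rangle]).
\]
By Condition (N) for $\torsione$, the composite $\T(K[\langle f_1,a\rangle])\to A_1$ is $\Zker(h)$ for some $h\colon A_1\to H$.

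\textbf{Choosing the target arrow (main obstacle).} It remains to find an arrow $(g_1,g_2)\colon a\to c$ in $\arrc$ whose levelwise $\zeros$-kernels are $\Zker(h)$ and $\varepsilon_{A_2}$. We cannot simply take $(h,1_{A_2})$, since that requires $c$ with $ch=a$. My first attempt is $c=\pi_{A_2}\colon H\times A_2\to A_2$, $g_1=\langle h,a\rangle$ and $g_2=1_{A_2}$, which is a morphism of $\arrc$. The difficulty is showing $K[\langle h,a\rangle]=K[h]$, and the Boolean counterexample shows this can fail in general. The inclusion $K[\langle h,a\rangle]\subseteq K[h]$ is clear. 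For the reverse, $K[h]\cong\T(\dots)$ lies inside $K[a]$, so on $K[h]$ the map $\langle h,a\rangle$ lands in $\Z(H)\times\Z(A_2)$, which is in $\torf$ but need not be in $\zeros$. Since $K[h]\in\tort$, though, the map into $\Z(H)\times\Z(A_2)\in\torf$ factors through $\T(\Z(H)\times\Z(A_2))\in\zeros$, by the same $\theta$-trick as in Proposition \ref{prop torsion of inters of ker}. This should yield a factorization through a zero subobject of $H\times A_2$, and I expect Condition (M) (through $\F(T)\cong\Z(Q)$) to be needed to identify the relevant zero parts. If this fails, the fallback is to apply (N) directly to $\langle f_1,a\rangle$, or to modify $H$ by replacing $h$ with $\langle h,a\rangle$ from the start and re-running the kernel computation.
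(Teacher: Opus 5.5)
Your route is correct, and your final step is already complete by the sentence you wrote. The hedging is unnecessary, and so is the expectation that Condition (M) is needed there. Write $j\colon K[h]\to A_1$ for $\Zker(h)$.
\begin{itemize}
\item $hj$ factors through $\varepsilon_H$.
\item $aj$ factors through $\varepsilon_{A_2}$, because $K[h]\cong\T(K[\kappa])$ lies inside $K[\kappa]\cong K[f_1]\cap K[a]\le K[a]$.
\item Hence $\langle h,a\rangle j=(\varepsilon_H\times\varepsilon_{A_2})u$ for some $u\colon K[h]\to\Z(H)\times\Z(A_2)$.
\item Since $\torf$ is reflective, it is closed under products, and it contains $\zeros$. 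So $u$ is an arrow from an object of $\tort$ to an object of $\torf$, and it lies in $\zideal$ by axiom (b) of a torsion theory.
\end{itemize}
Hence $\langle h,a\rangle j\in\zideal$, that is, $K[h]\le K[\langle h,a\rangle]$. You do not need to identify any zero subobject of $H\times A_2$: membership in $\zideal$ is all the kernel property asks for. Combine this with the trivial reverse inclusion and the levelwise computation of $\arrz$-kernels. This shows your composite is the $\arrz$-kernel of $(\langle h,a\rangle,1_{A_2})\colon a\to\pi_{A_2}$. Condition (M) enters only through the construction of $(\tort_1,\torf_1)$ and the formula for its counit, which you correctly took from the earlier proof. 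The fallbacks are not needed.

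Your first two steps coincide with the paper's. The paper proves $K[\kappa]\cong K[f_1]\cap K[a]$ by an explicit pullback, then applies Proposition \ref{prop torsion of inters of ker} and (N) for $\torsione$. The difference lies only in the choice of the target arrow.
\begin{itemize}
\item \emph{The paper's choice.} It takes the $\zeros$-cokernel $e\colon A_1\to Q$ of the torsion subobject $\T(K[\kappa])\to A_1$, whose existence it imports from Corollary 2.15 of the earlier paper. It uses that this subobject is the $\zeros$-kernel of $e$. It then factors $a$ through $e$ by the cokernel's universal property, since $a$ kills that subobject, giving the target $(e,1_{A_2})$.
\item \emph{Your choice.} Your product construction never needs $\zeros$-cokernels and uses the arrow $h$ supplied by (N) directly. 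In exchange it relies on closure of $\torf$ under products and on the kernel in question being torsion. This is exactly what neutralizes the failure of $K[\langle f,g\rangle]=K[f]\cap K[g]$ seen in the Boolean example.
\end{itemize}
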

	
	\begin{proof}
		Consider a morphism $(f,f') \colon a \to b$ in $\arrc$. Computing the $\arrz$-kernel of $(f, f')$ yields the following commutative diagram:
		\[\begin{tikzcd}
			{K[f]} & A & B \\
			{K[f']} & {A'} & {B'}
			\arrow["{k(f)}", from=1-1, to=1-2]
			\arrow["h"', from=1-1, to=2-1]
			\arrow["f", from=1-2, to=1-3]
			\arrow["a", from=1-2, to=2-2]
			\arrow["b", from=1-3, to=2-3]
			\arrow["{k(f')}"', from=2-1, to=2-2]
			\arrow["{f'}"', from=2-2, to=2-3]
		\end{tikzcd}\]
		where $h$ is induced by the universal property of the $\zeros$-kernel. Consider the pullback diagrams:
		\[\begin{tikzcd}
			{K[a] \cap K[f]} & {K[a]} && {K[a]} & {\Z(A')} \\
			{K[f]} & {A,} && A & {A'.}
			\arrow["{i_a}", tail, from=1-1, to=1-2]
			\arrow["{i_f}"', tail, from=1-1, to=2-1]
			\arrow["\lrcorner"{anchor=center, pos=0.125}, draw=none, from=1-1, to=2-2]
			\arrow["{k(a)}", tail, from=1-2, to=2-2]
			\arrow["\chi", from=1-4, to=1-5]
			\arrow["{k(a)}"', tail, from=1-4, to=2-4]
			\arrow["\lrcorner"{anchor=center, pos=0.125}, draw=none, from=1-4, to=2-5]
			\arrow["{k(f')\varepsilon}", tail, from=1-5, to=2-5]
			\arrow["{k(f)}"', tail, from=2-1, to=2-2]
			\arrow["a"', from=2-4, to=2-5]
		\end{tikzcd}\]
		Let $\varepsilon$ denote the inclusion of $\Z(A')$ into $K[f']$, and consider the diagram:
		\[\begin{tikzcd}
			{K[a] \cap K[f]} & {\Z(A')} \\
			{K[f]} & {K[f'].}
			\arrow["{\chi i_a}", from=1-1, to=1-2]
			\arrow["{i_f}"', from=1-1, to=2-1]
			\arrow["{(\heartsuit)}"{description}, draw=none, from=1-1, to=2-2]
			\arrow["\varepsilon", tail, from=1-2, to=2-2]
			\arrow["h"', from=2-1, to=2-2]
		\end{tikzcd} \]
		We prove that the square $(\heartsuit)$ is a pullback. First, we show that it commutes. Observe that $k(f') h i_f = a k(f) i_f = a k(a) i_a = k(f') \varepsilon \chi i_a$. As $k(f')$ is a monomorphism, we obtain $h i_f = \varepsilon \chi i_a$. Now, consider an object $X$ and a pair of morphisms $x_1 \colon X \to K[f]$ and $x_2 \colon X \to \Z(A')$ such that $h x_1 = \varepsilon x_2$. From the equalities $a k(f) x_1 = k(f') h x_1 = k(f') \varepsilon x_2$, it follows that there exists a unique $\varphi \colon X \to K[a]$ such that $k(a) \varphi = k(f) x_1$ and $\chi \varphi = x_2$. Furthermore, the relation $k(a) \varphi = k(f) x_1$ guarantees the existence of a unique $\theta \colon X \to K[a] \cap K[f]$ such that $i_f \theta = x_1$ and $i_a \theta = \varphi$. Finally, we have that $i_f \theta = x_1$ and $\chi i_a \theta = \chi \varphi = x_2$, and the fact that $i_f$ is a monomorphism gives us that the morphism $\theta$ satisfying these two equalities is unique, allowing us to conclude that $(\heartsuit)$ is indeed a pullback. This shows that $K[h] \cong K[a] \cap K[f]$.
		
		Recalling that $\Ht(h) \colon \T(K[h]) \to \Z(A')$, to show that $(\tort_1, \torf_1)$ satisfies (N) it suffices to prove that $\T(K[h])$ is the $\zeros$-kernel of an arrow with domain $A$. Indeed, if it is the case, then $\T(K[h])$ is the $\zeros$-kernel of its $\zeros$-cokernel $e \colon A \to Q$ in $\mathcal{C}$ (whose existence is guaranteed by Corollary $2.15$ in \cite{CM torsion theories 1}). Since $\arrz$-kernels are computed levelwise, $\T_1(h)$ is then the $\arrz$-kernel of $(e, 1_{A'})$ in $\arrc$:
		\[\begin{tikzcd}
			{\T(K[h])} & A & Q \\
			{\Z(A')} & {A'} & {A'.}
			\arrow[tail, from=1-1, to=1-2]
			\arrow["\T_1(h)"', from=1-1, to=2-1]
			\arrow["e", from=1-2, to=1-3]
			\arrow["a", from=1-2, to=2-2]
			\arrow["!", from=1-3, to=2-3]
			\arrow["{\varepsilon_{A'}}"', tail, from=2-1, to=2-2]
			\arrow["{1_{A'}}"', from=2-2, to=2-3]
		\end{tikzcd}\]
		Observe that $\T(K[h]) \cong \T(K[a] \cap K[f]) \cong \T(K[\spigolose{a,f}])$, thanks to Proposition \ref{prop torsion of inters of ker}. Therefore, the validity of (N) for $\torsione$ implies the validity of (N) for $(\tort_1, \torf_1)$.
	\end{proof}
	
	\section{Condition (P) for the torsion theory in the category of arrows}
	We now address the question of the validity of Condition (P) for the torsion theory $(\tort_1, \torf_1)$. In order to do that, we need to rephrase the property for a functor to be protoadditive in the sense of Definition \ref{def protoadditive}.
	
	Let $\G \colon \mathcal{C} \to \mathcal{D}$ be a functor between regular protomodular categories, both equipped with a class of zero objects. With a slight abuse of notation, we use the same symbol $\Z$ to denote the associated zero-object coreflector in both categories, and we write $\G\Z=\Z\G$ meaning that, for every object $C$ in $\mathcal{C}$, the image $\G(\Z(C))$ under $\G$ of the zero part of $C$ coincides with the zero part $\Z(\G(C))$ of $\G(C)$ in $\mathcal{D}$, and similarly for morphisms $f \colon C \to C'$ in $\mathcal{C}$.
	
	\begin{prop}
		Given a functor $\G \colon \mathcal{C} \to \mathcal{D}$ between regular protomodular categories with a class of zero objects, suppose that $\G\Z=\Z\G$. Then $\G$ is protoadditive if and only if $\G$ preserves $\zeros$-kernels of split epimorphisms.
	\end{prop}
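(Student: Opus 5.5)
The argument rests on one observation. For a split epimorphism $p \colon A \to B$ with section $s$, its $\zeros$-kernel is obtained by pulling $p$ back along the counit $\varepsilon_B \colon \Z(B) \to B$. This holds because in a regular category with a class of zero objects, an arrow $e$ satisfies $pe \in \zideal$ exactly when $pe$ factors through $\varepsilon_B$, by coreflectivity of $\zeros$. Moreover, $\varepsilon_B$ is inverted by $\Z$, since $\Z(\varepsilon_B)$ is the identity of $\Z(B)$ up to isomorphism. So $\zeros$-kernels of split epimorphisms are special instances of pullbacks of split epimorphisms along arrows inverted by $\Z$. The hypothesis $\G\Z=\Z\G$ will be used throughout in one form: $\G(\varepsilon_B)=\varepsilon_{\G(B)}$, and $\G$ sends arrows inverted by $\Z$ to arrows inverted by $\Z$.

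\emph{Protoadditive $\Rightarrow$ preserves $\zeros$-kernels of split epimorphisms.} Let $p$ be a split epimorphism with section $s$, and let $k \colon K[p]\to A$ be its $\zeros$-kernel, presented as the pullback of $p$ along $\varepsilon_B$. By protoadditivity, $\G$ preserves this pullback. Hence $\G(k)$ is the pullback of the split epimorphism $\G(p)$ along $\G(\varepsilon_B)=\varepsilon_{\G(B)}$, which by the observation above is the $\zeros$-kernel of $\G(p)$.

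\emph{Converse.} Let $p \colon A \to B$ be split by $s$, let $w \colon W \to B$ be inverted by $\Z$, and let $p' \colon P \to W$ with section $s'$ be the pullback of $p$ along $w$. Let $q \colon Q \to \G(W)$ be the pullback of $\G(p)$ along $\G(w)$, and let $c \colon \G(P) \to Q$ be the comparison arrow. Then $c$ is a morphism of split epimorphisms over $1_{\G(W)}$, from $(\G(p'),\G(s'))$ to $(q,\text{induced section})$. I would argue in three steps.
\begin{enumerate}
\item \emph{The kernels of $p'$ and $p$ agree.} The $\zeros$-kernel of $p'$ is the pullback along $\varepsilon_W$. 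Pasting pullbacks, this equals the pullback of $p$ along $w\varepsilon_W=\varepsilon_B\Z(w)$. Since $\Z(w)$ is an isomorphism, $K[p']\cong K[p]$ compatibly with the arrows into $A$.
\item \emph{The same holds after applying $\G$.} Because $\Z\G(w)=\G\Z(w)$ is an isomorphism, the same pasting argument in $\mathcal{D}$ gives $K[q]\cong K[\G(p)]$. By hypothesis, $K[\G(p)]\cong \G(K[p])$ and $K[\G(p')]\cong\G(K[p'])$.
\item \emph{$c$ is an isomorphism on kernels.} Combining the two steps, the restriction of $c$ to $\zeros$-kernels, that is, its pullback along $\varepsilon_{\G(W)}$, is an isomorphism.
\end{enumerate}
By protomodularity of $\mathcal{D}$, the change-of-base functor along $\varepsilon_{\G(W)}$ between fibres of points reflects isomorphisms. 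Hence $c$ is an isomorphism, and $\G$ preserves the pullback.

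I expect the main obstacle to be step 3 of the converse: checking that the restriction of $c$ to $\zeros$-kernels is really the composite of the canonical isomorphisms from the first two steps. This requires tracking that every identification is compatible with the arrows into $\G(A)$ and $\G(P)$. Since all the kernel arrows are monomorphisms, I would verify this by composing with the monomorphisms into $\G(A)$ and using the fact that $\G$ preserves the pasted pullback along $\varepsilon_B$, which is a $\zeros$-kernel.
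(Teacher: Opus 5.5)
Your proposal is correct and is essentially the paper's proof. The forward direction is the same. In the converse, your three steps are exactly the paper's chain: the kernel comparison $K[p']\to K[p]$ is invertible, and likewise $K[q]\to K[\G(p)]$. Hence the restriction of $c$ to $\zeros$-kernels is invertible, because its composite with the second comparison equals $\G$ of the first, which is checked by cancelling the monomorphism into $\G(A)$. A split short five lemma then finishes. The only difference is that you derive two facts directly: the kernel comparisons, by pasting pullbacks with $w\varepsilon_W=\varepsilon_B\Z(w)$, and the five lemma, by applying the definition of protomodularity to change of base along $\varepsilon_{\G(W)}$. The paper instead cites these as Propositions 4.3 and 5.2 of \cite{Cappelletti}.
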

	
	\begin{proof}
		Suppose first that $\G$ is protoadditive. Let $f \colon A \to B$ be a split epimorphism with section $s$, and let $k = \Zker(f)$, as pictured in the pullback diagram
		\[\begin{tikzcd}
			K & {\Z(B)} \\
			A & B.
			\arrow[from=1-1, to=1-2]
			\arrow["k"', from=1-1, to=2-1]
			\arrow["\lrcorner"{anchor=center, pos=0.125}, draw=none, from=1-1, to=2-2]
			\arrow["{\varepsilon_B}", from=1-2, to=2-2]
			\arrow["f"', shift right, from=2-1, to=2-2]
			\arrow["s"', shift right, from=2-2, to=2-1]
		\end{tikzcd}\]
		Since $\G$ is protoadditive, it preserves such a pullback. Recalling that $\G(\Z(B)) = \Z(\G(B))$, we get that $\G(k)$ is the $\zeros$-kernel of $\G(f)$.
		
		Conversely, suppose that $\G$ preserves $\zeros$-kernels of split epimorphisms. Let $f \colon A \to B$ be a split epimorphism with section $s$, and let $g \colon C \to B$ be a morphism inverted by $\Z$. Let $P$ be the pullback of $f$ along $g$, and let $k_1 = \Zker(l)$ and $k_2 = \Zker(f)$:
		\[\begin{tikzcd}
			{K[l]} & P & C \\
			{K[f]} & A & B.
			\arrow["{k_1}", from=1-1, to=1-2]
			\arrow["{k'}"', from=1-1, to=2-1]
			\arrow["l"', shift right, from=1-2, to=1-3]
			\arrow["h"', from=1-2, to=2-2]
			\arrow["\lrcorner"{anchor=center, pos=0.125}, draw=none, from=1-2, to=2-3]
			\arrow["r"', shift right, from=1-3, to=1-2]
			\arrow["g", from=1-3, to=2-3]
			\arrow["{k_2}"', from=2-1, to=2-2]
			\arrow["f"', shift right, from=2-2, to=2-3]
			\arrow["s"', shift right, from=2-3, to=2-2]
		\end{tikzcd}\]
		Since $\Z(g)$ is an isomorphism, we conclude that the induced morphism $k'$ is an isomorphism using Proposition $4.3$ in \cite{Cappelletti}. Moreover, since by assumption $\G$ preserves $\zeros$-kernels of split epimorphisms, $\G(k') \colon K[\G(l)] \to K[\G(f)]$ is an isomorphism. Consider the pullback of $\G(f)$ along $\G(g)$ and the corresponding commutative diagram:
		\[\begin{tikzcd}
			{K[\G(l)]} & {\G(P)} & {\G(C)} \\
			{K[\pi_C]} & Q & {\G(C)} \\
			{K[\G(f)]} & {\G(A)} & {\G(B),}
			\arrow["{\G(k_1)}", from=1-1, to=1-2]
			\arrow["u"', from=1-1, to=2-1]
			\arrow["{\G(k')}"', curve={height=30pt}, from=1-1, to=3-1]
			\arrow["{\G(l)}"', shift right, from=1-2, to=1-3]
			\arrow["\varphi"', from=1-2, to=2-2]
			\arrow["{\G(r)}"', shift right, from=1-3, to=1-2]
			\arrow[equal, from=1-3, to=2-3]
			\arrow["{\overline{k}}", from=2-1, to=2-2]
			\arrow["v"', from=2-1, to=3-1]
			\arrow["{\pi_C}", from=2-2, to=2-3]
			\arrow["{\pi_A}"', from=2-2, to=3-2]
			\arrow["\lrcorner"{anchor=center, pos=0.125}, draw=none, from=2-2, to=3-3]
			\arrow["{\G(g)}", from=2-3, to=3-3]
			\arrow["{\G(k_2)}"', from=3-1, to=3-2]
			\arrow["{\G(f)}"', shift right, from=3-2, to=3-3]
			\arrow["{\G(s)}"', shift right, from=3-3, to=3-2]
		\end{tikzcd}\]
		where $\overline{k}$ is the $\zeros$-kernel of $\pi_C$, while $u, v, \varphi$ are induced by the universal properties of the $\zeros$-kernels and of the pullback. Since $\Z(\G(g)) = \G(\Z(g))$, the morphism $\G(g)$ is inverted by $\Z$. By Proposition $4.3$ in \cite{Cappelletti}, it follows that $v$ is an isomorphism. Consequently, since $\G(k') = v u$, $u$ is an isomorphism, too. Finally, we can apply Proposition $5.2$ in \cite{Cappelletti} to deduce that $\varphi$ is an isomorphism, and therefore $\G$ is protoadditive.
	\end{proof}
	
	Let now $\torsione$ be a torsion theory in a regular protomodular category $\mathcal{C}$ with a class of zero objects. With another abuse of notation, let us denote by $\T$ not only the coreflector $\T \colon \mathcal{C} \to \tort$ but also its composite $\T \colon \mathcal{C} \to \mathcal{C}$ with the inclusion functor, which is usually called \emph{radical}. It is easy to observe that $\T \Z = \Z \T$, because for all $A \in \mathcal{C}$ the arrow $t_A \colon \T(A) \to A$ is a monomorphism and $\Z$ inverts monomorphisms, so that $\Z \T(A) = \Z(A) = \T \Z(A)$. The previous proposition, then, immediately implies the following:
	
	\begin{cor}\label{T protoadditivo}
		The radical $\T \colon \mathcal{C} \to \mathcal{C}$ is protoadditive if and only if it preserves $\zeros$-kernels of split epimorphisms.
	\end{cor}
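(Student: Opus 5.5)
This corollary is a direct specialization of the preceding proposition, taking $\G \coloneqq \T \colon \mathcal{C} \to \mathcal{C}$ with $\mathcal{D} = \mathcal{C}$, so the same class $\zeros$ of zero objects sits on both sides. The plan is to check the proposition's hypotheses in this case and then read off its conclusion.

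The ambient hypotheses are immediate. $\mathcal{C}$ is regular protomodular and carries the class $\zeros$, and the codomain is $\mathcal{C}$ itself, so both source and target are regular protomodular categories with a class of zero objects. Protoadditivity of $\T$ is meant in the sense of Definition \ref{def protoadditive}, now applied to the functor $\mathcal{C} \to \mathcal{C}$ with protomodular codomain $\mathcal{C}$.

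The one substantive hypothesis is $\T\Z = \Z\T$, and it has just been observed before the statement. On objects, $t_A \colon \T(A) \to A$ is a monomorphism and $\Z$ inverts monomorphisms, so $\Z\T(A) \cong \Z(A)$. Moreover $\Z(A) \in \zeros \subseteq \tort$, so $\T\Z(A) = \Z(A)$. On morphisms I would check naturality. For $f \colon A \to A'$, both $\Z\T(f)$ and $\T\Z(f)$ are arrows $\Z(A) \to \Z(A')$ between objects of $\zeros$, and $\zeros$ is posetal, so they coincide.

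With the hypotheses in place, the preceding proposition gives that $\T$ is protoadditive if and only if it preserves $\zeros$-kernels of split epimorphisms. There is essentially no obstacle. The only point needing care is that the identification $\T\Z = \Z\T$ holds up to the canonical isomorphisms induced by $t_{\Z(A)}$ and $\Z(t_A)$, which is all the proposition uses.
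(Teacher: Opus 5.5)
Your proposal is correct and follows the paper's route: the paper also obtains the corollary by applying the preceding proposition to $\G = \T$. Before doing so it observes that $\T\Z = \Z\T$, because $t_A$ is a monomorphism, $\Z$ inverts monomorphisms, and $\Z(A) \in \zeros \subseteq \tort$. Your extra check on morphisms via posetality of $\zeros$, and your remark about the canonical isomorphisms, just make explicit what the paper calls ``easy to observe''.
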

	
	We recall that a subcategory $\tort$ of a category $\mathcal{C}$ is said to be \emph{hereditary} with respect to a class $\mathbb{M}$ of monomorphisms in $\mathcal{C}$ if, for every $f \colon A \to B$ in $\mathbb{M}$, $B \in \tort$ implies $A \in \tort$, too. This property, for a suitable $\mathbb{M}$, is related to protoadditivity:
	
	\begin{prop}
		Let $\torsione$ be a torsion theory in a regular protomodular category $\mathcal{C}$ with a class of zero objects. $\tort$ is hereditary with respect to the class of protosplit monomorphisms (i.e.\ $\zeros$-kernels of split epimorphisms) if and only if the radical $\T \colon \mathcal{C} \to \mathcal{C}$ is protoadditive.
	\end{prop}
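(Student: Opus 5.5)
By Corollary \ref{T protoadditivo}, it suffices to prove the following equivalence: $\tort$ is hereditary with respect to protosplit monomorphisms if and only if the radical $\T$ preserves $\zeros$-kernels of split epimorphisms.

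Fix a split epimorphism $f \colon A \to B$ with section $s$ and $\zeros$-kernel $k \colon K \to A$. By naturality of the counit, $t_A \T(k) = k\, t_K$. Applying $\T$ to the $\zeros$-kernel pullback of $f$ (along $\varepsilon_B$), and using $\T\Z = \Z\T$, the $\zeros$-kernel of $\T(f)$ is the pullback of $k$ along $t_A$, that is, $K \cap \T(A)$. Denote by $j \colon K\cap\T(A) \to \T(A)$ the corresponding monomorphism. Note that $\T(f)$ is again split, with section $\T(s)$, so $j$ is itself a protosplit monomorphism.

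Then $\T(k)$ factors through $j$ via a comparison monomorphism $c \colon \T(K) \to K\cap \T(A)$. Preservation of the $\zeros$-kernel means exactly that $c$ is an isomorphism.

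\emph{Hereditary $\Rightarrow$ preservation.} Since $j$ is protosplit and $\T(A) \in \tort$, heredity gives $K \cap \T(A) \in \tort$. The inclusion $K\cap\T(A) \to K$ then factors through $t_K$ by the coreflection property, giving an inverse to $c$. Both composites are identities because all the maps involved are monomorphisms over $A$.

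\emph{Preservation $\Rightarrow$ hereditary.} Let $k \colon K \to T$ be the $\zeros$-kernel of a split epimorphism $f \colon T \to B$ with $T \in \tort$. Then $t_T$ is an isomorphism, so $K \cap \T(T) = K$. Preservation gives that $\T(K) \to K$ is an isomorphism, hence $K \in \tort$, since $\tort$ is replete.

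The main obstacle is the first step: identifying $K[\T(f)]$ with the intersection $K \cap \T(A)$ inside $A$. This requires knowing that $\T(B)$ has zero part $\Z(B)$, and that pulling back $\varepsilon_B$ along $\T(f)$ agrees with pulling back along $f$ and then restricting to $\T(A)$. This should follow from $t_B$ being a monomorphism and from the equality $\Z(\T(B)) = \Z(B)$ noted before Corollary \ref{T protoadditivo}, by a standard pasting of pullbacks.
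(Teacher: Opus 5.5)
Your proof is correct. The forward direction matches the paper's; the converse takes a genuinely different route.

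\emph{What you do differently.} You first identify $K[\T(f)]$ explicitly as the pullback $K\cap\T(A)$ of $k$ along $t_A$. This is justified by pulling back $\varepsilon_B$ along the composite $ft_A=t_B\T(f)$ in the two possible orders. The key point is that $\varepsilon_B$ factors through the mono $t_B$ as $\varepsilon_{\T(B)}$, up to the iso $\Z(t_B)$. Note that this works for every $f$, not only split ones. The pasting is the real justification; your opening phrase about ``applying $\T$ to the kernel pullback'' only yields $\T(K)$ as a pullback in $\tort$. Both directions then reduce to the comparison mono $c$, which satisfies $ic=t_K$.

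\emph{The paper's route.} The paper never computes $K'=K[\T(f)]$. In the forward direction it obtains $\varphi\colon\T(K)\to K'$ from the kernel property. It obtains $\psi\colon K'\to\T(K)$ because $\T\colon\mathcal{C}\to\tort$, being a right adjoint, sends the kernel pullback of $f$ to a pullback in $\tort$, and $K'\in\tort$ by heredity. Your use of the coreflection at $K$ is an equivalent substitute. In the converse, the paper first shows that the split quotient $B$ of $T\in\tort$ is torsion: $\F(r)$ is a split mono into $\F(T)\in\zeros$, so $\F(B)\in\zeros$. Hence $\T(g)=g$, and protoadditivity gives $\T(\overline{K})\cong\Zker(g)=\overline{K}$.

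\emph{Comparison.} Your description $K[\T(f)]=K\cap\T(T)=K$ makes the detour through $B\in\tort$ unnecessary. It also yields the slightly more general fact that $\T$ always turns $\zeros$-kernels into their intersections with $\T(A)$. The price is one pasting-of-pullbacks computation, whereas the paper works purely with universal properties.
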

	
	\begin{proof}
		Suppose $\tort$ is hereditary with respect to the class of protosplit monomorphisms. Thanks to Corollary \ref{T protoadditivo}, it suffices to prove that $\T$ preserves $\zeros$-kernels of split epimorphisms. Let $f \colon A \to B$ be a split epimorphism with section $s$, and let $k = \Zker(f)$, $k' = \Zker(\T(f))$. Since $\T(f) \T(k) = \T(f k) \in \zideal$, there exists a unique morphism $\varphi$ making the following diagram commute:
		\[\begin{tikzcd}
			{\T(K)} & {\T(A)} & {\T(B)} \\
			{K'} & {\Z(\T(B)).}
			\arrow["{\T(k)}", from=1-1, to=1-2]
			\arrow["\varphi"', from=1-1, to=2-1]
			\arrow["{\T(f)}"', shift right, from=1-2, to=1-3]
			\arrow["{\T(s)}"', shift right, from=1-3, to=1-2]
			\arrow["{k'}"', from=2-1, to=1-2]
			\arrow["{\chi'}"', from=2-1, to=2-2]
			\arrow["{\varepsilon_{\T(B)}}"', from=2-2, to=1-3]
		\end{tikzcd}\]
		By assumption, $K' \in \tort$. Now, consider the pullback diagram
		\[\begin{tikzcd}
			K & {\Z(B)} \\
			A & B.
			\arrow["\chi", from=1-1, to=1-2]
			\arrow["k"', from=1-1, to=2-1]
			\arrow["\lrcorner"{anchor=center, pos=0.125}, draw=none, from=1-1, to=2-2]
			\arrow["{\varepsilon_B}", from=1-2, to=2-2]
			\arrow["f"', shift right, from=2-1, to=2-2]
			\arrow["s"', shift right, from=2-2, to=2-1]
		\end{tikzcd}\]
		Since the functor $\T \colon \mathcal{C} \to \tort$ preserves limits, the image of the diagram above under $\T$ is a pullback in $\tort$. Hence, since $K' \in \tort$, there exists a unique morphism $\psi$ making the following diagram commute in $\tort$:
		\[\begin{tikzcd}
			{K'} && \\
			& {\T(K)} & {\T(\Z(B))} \\
			& {\T(A)} & {\T(B).}
			\arrow["\psi", from=1-1, to=2-2]
			\arrow["{\chi'}", curve={height=-12pt}, from=1-1, to=2-3]
			\arrow["{k'}"', curve={height=12pt}, from=1-1, to=3-2]
			\arrow["{\T(\chi)}", from=2-2, to=2-3]
			\arrow["{\T(k)}"', from=2-2, to=3-2]
			\arrow["{\T(\varepsilon_B)}", from=2-3, to=3-3]
			\arrow["{\T(f)}"', shift right, from=3-2, to=3-3]
			\arrow["{\T(s)}"', shift right, from=3-3, to=3-2]
		\end{tikzcd}\]
		It is straightforward to verify that $\varphi$ and $\psi$ are mutually inverse.
		
		Conversely, consider a split epimorphism $g \colon T \to B$, with section $r$, such that $T \in \tort$. Let $\overline{k} \colon \overline{K} \to T$ be the $\zeros$-kernel of $g$. Applying $\F$ to $r$, we obtain a split monomorphism $\F(r) \colon \F(B) \to \F(T)$ with $\F(T) \in \zeros$. Therefore, $\F(B) \in \zeros$ as well, which implies $B \in \tort$. Hence, since $\T$ is protoadditive by assumption, applying $\T$ to the sequence $\overline{K} \xrightarrow{\overline{k}} T \xrightarrow{g} B$, and noting that $\T(g) = g$, we obtain $\T(\overline{K}) \cong \Zker(\T(g)) \cong \Zker(g) \cong \overline{K}$. Consequently, $\overline{K} \in \tort$.
	\end{proof}
	
	\begin{lem}\label{q regolare}
		Let $\torsione$ be a torsion theory in a regular protomodular category $\mathcal{C}$ with a class of zero objects. Consider a split epimorphism $p$ with section $s$ and a morphism $g$. Let $P$ denote the pullback of $p$ along $g$ and $F_P$ the pullback of $\F(p)$ along $\F(g)$. The induced morphism $q \colon P \to F_P$ is a regular epimorphism.
	\end{lem}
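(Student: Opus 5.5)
The plan is to factor $q$ as a composite of two maps and show that each is a regular epimorphism. Write $p \colon A \to B$ with section $s$, $g \colon C \to B$, $P = A \times_B C$ with projections $l \colon P \to C$, $h \colon P \to A$, and $F_P = \F(A) \times_{\F(B)} \F(C)$. Put
\[ W \coloneqq A \times_{\F(B)} C, \]
the pullback of $\eta_B p$ along $\eta_B g$. Then $q$ factors as $P \xrightarrow{j} W \xrightarrow{r} F_P$. Here $j$ is the canonical comparison, which exists because $\eta_B p h = \eta_B g l$. The map $r = \eta_A \times_{\eta_B} \eta_C$ is induced by the units, using $\F(p)\eta_A = \eta_B p$ and $\F(g)\eta_C = \eta_B g$.

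First I will show that $r$ is a regular epimorphism. The units $\eta_A$ and $\eta_C$ are regular epimorphisms, since $\torf$ is epireflective and, in a regular category, they are $\zeros$-cokernels. Write $r$ as the composite
\[ A \times_{\F(B)} C \to \F(A) \times_{\F(B)} C \to \F(A) \times_{\F(B)} \F(C). \]
The first map is a pullback of $\eta_A$ (along the projection to $\F(A)$), and the second is a pullback of $\eta_C$. Both are therefore regular epimorphisms by pullback stability in the regular category $\mathcal{C}$, and so is their composite.

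Next I will show that $j$ is a regular epimorphism; this is where the splitting and protomodularity are used. The projection $\pi \colon W \to C$ is a split epimorphism, with section $\sigma = \langle sg, 1_C \rangle$; this is well defined since $\eta_B p s g = \eta_B g$. The section of $l$ in $P$ is $\langle sg, 1_C\rangle$ as well, so $\sigma = j \langle sg, 1_C \rangle$, and $\sigma$ factors through $j$. To see that $j$ covers the rest of $W$, I will express $P$ inside $W$ as a pullback of the split epimorphism $(\pi,\sigma)$. The natural candidate is the pullback along the monomorphism that picks out the $\zeros$-kernel of $\eta_B$. Indeed, a generalized element $(a,c)$ of $W$ lies in $P$ exactly when $pa = gc$. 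Moreover, $\eta_B$ identifies $pa$ and $gc$ exactly when the "difference" of $pa$ and $gc$ lies in $K[\eta_B] \cong \T(B)$; in the protomodular setting this is made precise through the kernel pair of $\eta_B$ and its normalization. Protomodularity then says that the section $\sigma$, together with the map from the pullback of $\pi$ along the kernel-type map, are jointly strongly epic onto $W$. Both of them factor through the image of $j$. That image is therefore all of $W$, so $j$ is a strong, hence (in a regular category) regular, epimorphism.

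Finally, $q = rj$ is a regular epimorphism, because regular epimorphisms compose in a regular category.

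The main obstacle is the middle step. In the non-pointed context there is no genuine kernel of $\pi$, so the relevant "fibre" has to be chosen carefully. I would try first the pullback of $\pi$ along the projection $R \to C$, where $R$ is the kernel pair of $\eta_B$ pulled back along $g$. This yields two jointly strongly epic families: $\sigma$, and the map coming from the kernel pair. I would then check directly that the latter factors through $P$, using that the fibres of $p$ are translated by the section $s$. If this becomes messy, an alternative is to verify that $j$ is a pullback of the regular epimorphism comparing the kernel pair of $\eta_B$ to the diagonal; that would give regularity of $j$ directly by pullback stability.
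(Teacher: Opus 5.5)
\textbf{There is a genuine gap: your middle step is false.} Your factorization $q = rj$ through $W = A \times_{\F(B)} C$ is legitimate, and your proof that $r$ is a regular epimorphism (a composite of a pullback of $\eta_A$ and a pullback of $\eta_C$) is correct. But $j$ is a monomorphism. The two projections $P \to A$ and $P \to C$ are jointly monic and factor through $j$, since both $P$ and $W$ are subobjects of $A \times C$. You note this yourself when you say that $(a,c) \in W$ lies in $P$ iff $pa = gc$.

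So ``$j$ is a regular epimorphism'' amounts to $P \cong W$, i.e.\ to the implication $\eta_B pa = \eta_B gc \Rightarrow pa = gc$, and this fails in general. Take $A = C = B$ and $p = s = g = 1_B$. Then $P = B$, $W$ is the kernel pair $R[\eta_B]$, and $j$ is the diagonal $B \to R[\eta_B]$. This is an isomorphism only if $\eta_B$ is a monomorphism, i.e.\ (since $\eta_B$ is a regular epimorphism) only if $B \in \torf$. Concretely, in $\mathbf{Ab}$ with torsion/torsion-free groups and $B = \mathbb{Z}/2$, $j$ is the diagonal $\mathbb{Z}/2 \to \mathbb{Z}/2 \times \mathbb{Z}/2$, while $q = \eta_B \colon \mathbb{Z}/2 \to 0$ is indeed a regular epimorphism.

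Both of your sketched routes break for the same reason. The map from the ``kernel-pair fibre'' of $\pi$ does not factor through $P$, because its elements only satisfy $\eta_B pa = \eta_B gc$. And $j$ is a pullback of the diagonal $B \to R[\eta_B]$, which is a split monomorphism, not a regular epimorphism.

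\textbf{The fix is to change the intermediate object.} Factor $q$ through $I = \F(A) \times_{\F(B)} C$, the pullback of $\F(p)$ along $\eta_B g$, as the paper does. The second factor $I \to F_P$ is a pullback of $\eta_C$, by the same argument you gave for $r$.

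For the first factor, put $P^* = B \times_{\F(B)} \F(A)$ and $\chi = \langle p, \eta_A \rangle \colon A \to P^*$. Pasting pullbacks shows that $I$ is the pullback of the projection $P^* \to B$ along $g$. Hence $P \to I$ is the pullback of $\chi$ along $I \to P^*$, since the composite $A \to P^* \to B$ is $p$.

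Finally, $\chi$ is a regular epimorphism because $p$, $\eta_A$, $\eta_B$, $\F(p)$ form a commutative square of regular epimorphisms. The splitting of $p$ is used here, to make $p$ and $\F(p)$ regular epimorphisms. In a regular Mal'cev category every such square is a regular pushout (Bourn, denormalized $3 \times 3$ lemma, Proposition 3.2). So protomodularity enters through this Mal'cev regular-pushout property, not through an argument that a section and a fibre are jointly strongly epic.
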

	
	\begin{proof}
		Let $P$ be the pullback of $p$ along $g$:
		\[
		\begin{tikzcd}
			P \arrow[r, "p'"] \arrow[d, "g'"'] \arrow[dr, phantom, "\lrcorner", very near start] & C \arrow[d, "g"] \\
			A \arrow[r, "p"'] & B
		\end{tikzcd}
		\]
		and $F_P$ be the pullback of $\F(p)$ along $\F(g)$:
		\[\begin{tikzcd}
			P && \\
			& {F_P} & {\F(C)} \\
			& {\F(A)} & {\F(B).}
			\arrow["q", from=1-1, to=2-2]
			\arrow["{\eta_Cp'}", curve={height=-12pt}, from=1-1, to=2-3]
			\arrow["{\eta_A g'}"', curve={height=12pt}, from=1-1, to=3-2]
			\arrow["{\pi_C}", from=2-2, to=2-3]
			\arrow["{\pi_A}"', from=2-2, to=3-2]
			\arrow["\lrcorner"{anchor=center, pos=0.125}, draw=none, from=2-2, to=3-3]
			\arrow["{\F(g)}", from=2-3, to=3-3]
			\arrow["{\F(p)}"', from=3-2, to=3-3]
		\end{tikzcd}\]
		The morphism $q \colon P \to F_P$, induced by the universal property of $F_P$, is the unique arrow such that $\pi_A q = \eta_A g'$ and $\pi_C q = \eta_C p'$. To prove that $q$ is a regular epimorphism, we construct the pullback diagram
		\begin{equation*}
			\begin{tikzcd}
				I \arrow[r, "\gamma"] \arrow[d, "\alpha"'] \arrow[dr, phantom, "\lrcorner", very near start] & C \arrow[d, "\eta_B g"] \\
				\F(A) \arrow[r, "\F(p)"'] & \F(B).
			\end{tikzcd}
		\end{equation*}
		We observe that $\F(p) \eta_A g' = \eta_B p g' = \eta_B g p'$, so there exists a unique $q_1 \colon P \to I$ such that $\alpha q_1 = \eta_A g'$ and $\gamma q_1 = p'$. Moreover, since $\F(p) \alpha = \eta_B g \gamma = \F(g) \eta_C \gamma$, there exists a unique $q_2 \colon I \to F_P$ such that $\pi_A q_2 = \alpha$ and $\pi_C q_2 = \eta_C \gamma$. The composition $q_2 q_1$ satisfies $\pi_A q_2 q_1 = \alpha q_1 = \eta_A g' = \pi_A q$ and $\pi_C q_2 q_1 = \eta_C \gamma q_1 = \eta_C p' = \pi_C q$; hence, $q = q_2 q_1$.
		
		First, we establish that $q_1$ is a regular epimorphism. Consider the pullback diagram
		\[\begin{tikzcd}
			A && \\
			& {P^*} & {\F(A)} \\
			& B & {\F(B)}
			\arrow["\chi", from=1-1, to=2-2]
			\arrow["{\eta_A}", curve={height=-12pt}, from=1-1, to=2-3]
			\arrow["p"', curve={height=12pt}, from=1-1, to=3-2]
			\arrow["{p_1}", from=2-2, to=2-3]
			\arrow["{p_2}"', from=2-2, to=3-2]
			\arrow["\lrcorner"{anchor=center, pos=0.125}, draw=none, from=2-2, to=3-3]
			\arrow["{\F(p)}", from=2-3, to=3-3]
			\arrow["{\eta_B}"', from=3-2, to=3-3]
		\end{tikzcd}\]
		and the morphism $\chi$ induced by the universal property. Since $\mathcal{C}$ is regular protomodular (and thus Mal'cev, see e.g. Proposition $3.1.19$ in \cite{Bbbook}), we can use Proposition $3.2$ of \cite{denormalized 3x3} to conclude that $\chi$ is a regular epimorphism. Since $\eta_B g \gamma = \F(p) \alpha$, there exists a unique $h \colon I \to P^*$ such that $p_1 h = \alpha$ and $p_2 h = g \gamma$. We claim the following square is a pullback:
		\begin{equation*}
			\begin{tikzcd}
				P \arrow[r, "q_1"] \arrow[d, "g'"'] & I \arrow[d, "h"] \\
				A \arrow[r, "\chi"'] & P^*.
			\end{tikzcd}
		\end{equation*}
		This square commutes; indeed, $p_1 h q_1 = \alpha q_1 = \eta_A g'= p_1 \chi g'$ and $p_2 h q_1 = g \gamma q_1 = g p' = p g' = p_2 \chi g'$ (being $p_1, p_2$ jointly monomorphic). Consider the commutative diagram
		\[\begin{tikzcd}
			P & I & C \\
			A & {P^*} & B
			\arrow["{q_1}", from=1-1, to=1-2]
			\arrow["{p'}", curve={height=-30pt}, from=1-1, to=1-3]
			\arrow["{g'}"', from=1-1, to=2-1]
			\arrow["\gamma", from=1-2, to=1-3]
			\arrow["h", from=1-2, to=2-2]
			\arrow["{(\diamond)}"{description}, draw=none, from=1-2, to=2-3]
			\arrow["g", from=1-3, to=2-3]
			\arrow["\chi"', from=2-1, to=2-2]
			\arrow["p"', curve={height=30pt}, from=2-1, to=2-3]
			\arrow["{p_2}"', from=2-2, to=2-3]
		\end{tikzcd}\]
		and observe that the whole rectangle is a pullback diagram. If we can prove that $(\diamond)$ is a pullback diagram, we can deduce that the square on the left is also a pullback diagram. Consequently, $q_1$ would be a regular epimorphism, being the pullback of the regular epimorphism $\chi$ in the regular category $\mathcal{C}$. Therefore, consider the commutative diagram
		\[\begin{tikzcd}
			I & C \\
			{P^*} & B \\
			{\F(A)} & {\F(B)}
			\arrow["\gamma", from=1-1, to=1-2]
			\arrow["h"', from=1-1, to=2-1]
			\arrow["{(\diamond)}"{description}, draw=none, from=1-1, to=2-2]
			\arrow["\alpha"', curve={height=30pt}, from=1-1, to=3-1]
			\arrow["g", from=1-2, to=2-2]
			\arrow["{\eta_Bg}", curve={height=-30pt}, from=1-2, to=3-2]
			\arrow["{p_2}", from=2-1, to=2-2]
			\arrow["{p_1}"', from=2-1, to=3-1]
			\arrow["{\eta_B}", from=2-2, to=3-2]
			\arrow["{\F(p)}"', from=3-1, to=3-2]
		\end{tikzcd}\]
		and note that both the entire rectangle and the bottom square are pullback diagrams. From that, we conclude that $(\diamond)$ is indeed a pullback diagram.
		
		Next, consider the commutative diagram
		\[\begin{tikzcd}
			I & C \\
			{F_P} & {\F(C)} \\
			{\F(A)} & {\F(B),}
			\arrow["\gamma", from=1-1, to=1-2]
			\arrow["{q_2}"', from=1-1, to=2-1]
			\arrow["\alpha"', curve={height=30pt}, from=1-1, to=3-1]
			\arrow["{\eta_C}", from=1-2, to=2-2]
			\arrow["{\eta_B g}", curve={height=-30pt}, from=1-2, to=3-2]
			\arrow["{\pi_C}"', from=2-1, to=2-2]
			\arrow["{\pi_A}"', from=2-1, to=3-1]
			\arrow["{\F(g)}", from=2-2, to=3-2]
			\arrow["{\F(p)}"', from=3-1, to=3-2]
		\end{tikzcd}\]
		where the bottom square and the entire rectangle are pullback diagrams. It follows that the top square is a pullback, too. Hence, $q_2$ is a regular epimorphism as the pullback of the regular epimorphism $\eta_C$.
		
		Finally, since $q = q_2 q_1$ is the composition of two regular epimorphisms, it is a regular epimorphism, as well.
	\end{proof}
	
	\begin{prop}
		Let $\torsione$ be a torsion theory satisfying Condition (M') in a regular protomodular category $\mathcal{C}$ with a class of zero objects. Then, the reflector $\F \colon \mathcal{C} \to \torf$ is protoadditive if and only if the radical $\T \colon \mathcal{C} \to \mathcal{C}$ is protoadditive.
	\end{prop}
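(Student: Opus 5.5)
Both properties will be reduced to statements about $\zeros$-kernels of split epimorphisms. By Corollary \ref{T protoadditivo}, $\T$ is protoadditive exactly when it preserves $\zeros$-kernels of split epimorphisms. For $\F$ the preceding Proposition does not apply directly, since $\F\Z$ and $\Z\F$ need not coincide. So for $\F$ I will work with the definition: preservation of pullbacks of split epimorphisms along arrows inverted by $\Z$. The bridge between the two is the short exact sequence $\T(A) \to A \to \F(A)$, Condition (M'), and Lemma \ref{q regolare}.

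\emph{($\F$ protoadditive $\Rightarrow$ $\T$ protoadditive.)} Let $p\colon A\to B$ be split by $s$, with $\zeros$-kernel $k\colon K\to A$.
\begin{enumerate}
\item Compute $\F(K)$. The square with $k$, $\varepsilon_B$, $p$ and $K\to\Z(B)$ is a pullback of a split epimorphism along $\varepsilon_B$, which is inverted by $\Z$. Protoadditivity of $\F$ therefore makes $\F(K)\to\F(A)$ the pullback of $\F(p)$ along $\F(\varepsilon_B)$.
\item Identify $\F(\Z(B))$. Using (M') and $\Z(B)=\Z\T(B)$, I expect $\F(\Z(B))\cong \Z(\F(B))$ up to the canonical comparison. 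Then $\F(k)$ is the $\zeros$-kernel of $\F(p)$, so $\F$ preserves such kernels.
\item Conclude via a $3\times 3$ argument. Place the rows $\T(K)\to K\to\F(K)$, $\T(A)\to A\to\F(A)$ and $\T(B)\to B\to\F(B)$ in a diagram. The right column is now a split $\zeros$-exact sequence. The denormalized $3\times3$ lemma, as in the proof of Lemma \ref{q regolare}, should then give that $\T(k)$ is the $\zeros$-kernel of $\T(p)$. Alternatively, one can show directly that $\T(K)\to \Zker(\T(p))$ is invertible by comparing torsion parts.
\end{enumerate}

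\emph{($\T$ protoadditive $\Rightarrow$ $\F$ protoadditive.)} Take a split epimorphism $p$ and an arrow $g$ inverted by $\Z$. Let $P$ be the pullback of $p$ along $g$, and $F_P$ the pullback of $\F(p)$ along $\F(g)$.
\begin{enumerate}
\item By Lemma \ref{q regolare}, the comparison $q\colon P\to F_P$ is a regular epimorphism. Since $F_P\in\torf$ (as $\torf$ is closed under limits), $q$ factors as $\omega\,\eta_P$ with $\omega\colon\F(P)\to F_P$. So $\omega$ is a regular epimorphism, and it remains to show that $\omega$ is a monomorphism.
\item In a regular protomodular category, it suffices to show that the $\zeros$-kernel of $q$ is contained in $\T(P)$, and then that $\omega$ has trivial $\zeros$-kernel.
\item Since $\T$ is protoadditive, $\T(P)$ is the pullback of $\T(p)$ along $\T(g)$. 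Hence the $\zeros$-kernels of $\T(p')$ and $\T(p)$ agree, as in the argument using Proposition 4.3 of \cite{Cappelletti}.
\item Compare the $\zeros$-kernels of $q$ and of $\eta_P$ using the kernels of the split epimorphisms $p'$, $p$, $\F(p)$ and $\pi_C$.
\end{enumerate}

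The main obstacle is this last comparison: showing that $K[q]\subseteq \T(P)$, equivalently that $K[q]$ lies in $\tort$. My plan is to intersect with $K[p']\cong K[p]$. Points of $K[q]$ lying over $\Z(C)$ in $C$ land in $K[p]\cap K[\eta_A]$. That object should be $\T(K[p])$ up to the identification of $\Z$-parts, which is where (M') is used. Protoadditivity of $\T$ gives $\T(K[p])=K[\T(p)]$, and this object is in $\tort$. Protomodularity, via the split epimorphism $p'$ with section induced by $s$, then reduces the whole kernel to its part over the zero part of $C$.
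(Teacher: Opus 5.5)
\textbf{Direction ($\F$ protoadditive $\Rightarrow$ $\T$ protoadditive).} This direction breaks at your step 2, and step 3 depends on it.

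\emph{Why step 2 fails.} Since $\Z(B)\in\zeros\subseteq\torf$, one has $\F(\Z(B))=\Z(B)$. Under this identification, $\F(\varepsilon_B)=\eta_B\varepsilon_B=\varepsilon_{\F(B)}\Z(\eta_B)$. So step 1 only tells you that $\F(K)$ is the pullback of $K[\F(p)]\to\Z(\F(B))$ along $\Z(\eta_B)\colon\Z(B)\to\Z(\F(B))$. Already for $p=1_B$, the identification you expect amounts to $\Z(\eta_B)$ being invertible for every $B$, i.e.\ to $\F\Z=\Z\F$. You yourself noted at the outset that this need not hold.

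\emph{Why (M') does not help.} Condition (M') identifies $\F(\T(B))$ with $\Z(\F(B))$, whereas $\Z(B)=\Z(\T(B))$. For $T\in\tort$ the comparison $\Z(T)\to\F(T)$ need not be invertible. For instance, take MV-algebras with the Boolean algebras as torsion-free part; this is a torsion theory satisfying (M'). There $B=[0,1]$ is torsion, with $\Z(B)=\{0,1\}$ but $\F(B)$ trivial. So $\F$ does not even preserve the $\zeros$-kernel of $1_B$.

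Hence the right column of your $3\times 3$ diagram is not $\zeros$-exact, and step 3 has nothing to run on. In any case, the denormalized $3\times3$ result used in Lemma~\ref{q regolare} concerns a comparison map being a regular epimorphism, not $\zeros$-kernels of torsion parts.

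\emph{How to fix it.} The fix is to avoid $\Z(\F(-))$ altogether. Two routes work:
\begin{itemize}
\item The paper's route. Before splitting into the two directions, the paper proves that $k\colon T_P\to P$ is the $\zeros$-kernel of the comparison $q\colon P\to F_P$, where $T_P$ is the pullback of $\T(p)$ along $\T(g)$. It does so by checking that the square formed by $k$, $q$, $\varepsilon_{F_P}$ and $\Z(\sigma)\eta_{\T(C)}\rho_C$ is a pullback. Protoadditivity of $\F$ means $q\cong\eta_P$, whence $T_P\cong\T(P)$.
\item The hereditary characterization. Let $g\colon T\to B$ be a split epimorphism with $T\in\tort$. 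Then $B\in\tort$ and $\F(g)$ is an isomorphism between zero objects. Protoadditivity gives $\F(K[g])\cong\F(\Z(B))=\Z(B)\in\zeros$, hence $K[g]\in\tort$. The proposition relating heredity to protoadditivity of $\T$ then finishes.
\end{itemize}

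\textbf{Converse direction.} Your skeleton is the paper's: Lemma~\ref{q regolare}, then $q=\omega\eta_P$, then show $\omega$ is monic. The pullback square above gives $K[q]=T_P=\T(P)$ at once. So your pointwise decomposition of $K[q]$ over $\Z(C)$ is unnecessary, and as written it is only heuristic.

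The last step, however, misses a non-pointed ingredient. A regular epimorphism whose $\zeros$-kernel is just the zero part of its domain need not be a monomorphism; think of $\mathbb{Z}\to\mathbb{Z}/2$ in unitary rings. So knowing $K[\omega]=\Z(\F(P))$ is not enough. You must also show that $\Z(\omega)$ is invertible, i.e.\ $\Z(\F(P))\cong\Z(F_P)$. The paper gets this from both being isomorphic to $\Z(\F(C))$ via the split epimorphisms $\F(p')$ and $\pi_C$, and only then concludes with Lemma~5.4 of \cite{Cappelletti}.
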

	
	\begin{proof}
		Let $p$ be a split epimorphism with section $s$. Consider the pullback of $p$ along an arrow $g$ in $\mathcal{C}$ inverted by $\Z$:
		\[\begin{tikzcd}
			P & C \\
			A & B.
			\arrow["{p'}"', shift right, from=1-1, to=1-2]
			\arrow["{g'}"', from=1-1, to=2-1]
			\arrow["\lrcorner"{anchor=center, pos=0.125}, draw=none, from=1-1, to=2-2]
			\arrow["{s'}"', shift right, from=1-2, to=1-1]
			\arrow["g", from=1-2, to=2-2]
			\arrow["p"', shift right, from=2-1, to=2-2]
			\arrow["s"', shift right, from=2-2, to=2-1]
		\end{tikzcd}\]
		Consider then the pullbacks of $\F(p)$ along $\F(g)$ and of $\T(p)$ along $\T(g)$ in $\mathcal{C}$:
		\[\begin{tikzcd}
			{F_P} & \F(C) & {T_P} & \T(C) \\
			\F(A) & \F(B) & \T(A) & \T(B).
			\arrow["{\pi_C}"', shift right, from=1-1, to=1-2]
			\arrow["{\pi_A}"', from=1-1, to=2-1]
			\arrow["\lrcorner"{anchor=center, pos=0.125}, draw=none, from=1-1, to=2-2]
			\arrow["\sigma"', shift right, from=1-2, to=1-1]
			\arrow["\F(g)", from=1-2, to=2-2]
			\arrow["{\rho_C}", from=1-3, to=1-4]
			\arrow["{\rho_A}"', from=1-3, to=2-3]
			\arrow["\lrcorner"{anchor=center, pos=0.125}, draw=none, from=1-3, to=2-4]
			\arrow["\T(g)", from=1-4, to=2-4]
			\arrow["\F(p)"', shift right, from=2-1, to=2-2]
			\arrow["\F(s)"', shift right, from=2-2, to=2-1]
			\arrow["\T(p)"', shift right, from=2-3, to=2-4]
			\arrow["\T(s)"', shift right, from=2-4, to=2-3]
		\end{tikzcd}\]
		Since $p t_A \rho_A = t_B \T(p) \rho_A = t_B \T(g) \rho_C = g t_C \rho_C$, there exists a unique $k \colon T_P \to P$ such that $g' k = t_A \rho_A$ and $p' k = t_C \rho_C$. Furthermore, since $\F(p) \eta_A g' = \eta_B p g' = \eta_B g p' = \F(g) \eta_C p'$, there exists a unique $q \colon P \to F_P$ such that $\pi_A q = \eta_A g'$ and $\pi_C q = \eta_C p'$:
		\[\begin{tikzcd}
			P &&& {T_P} && \\
			& {F_P} & \F(C) && P & C \\
			& \F(A) & \F(B), && A & B.
			\arrow["q", from=1-1, to=2-2]
			\arrow["{\eta_C p'}", curve={height=-12pt}, from=1-1, to=2-3]
			\arrow["{\eta_A g'}"', curve={height=12pt}, from=1-1, to=3-2]
			\arrow["k", from=1-4, to=2-5]
			\arrow["{t_C \rho_C}", curve={height=-12pt}, from=1-4, to=2-6]
			\arrow["{t_A \rho_A}"', curve={height=12pt}, from=1-4, to=3-5]
			\arrow["{\pi_C}"', shift right, from=2-2, to=2-3]
			\arrow["{\pi_A}"', from=2-2, to=3-2]
			\arrow["\lrcorner"{anchor=center, pos=0.125}, draw=none, from=2-2, to=3-3]
			\arrow["\sigma"', shift right, from=2-3, to=2-2]
			\arrow["{\F(g)}", from=2-3, to=3-3]
			\arrow["{\F(p)}"', shift right, from=3-2, to=3-3]
			\arrow["{\F(s)}"', shift right, from=3-3, to=3-2]
			\arrow["{p'}"', shift right, from=2-5, to=2-6]
			\arrow["{s'}"', shift right, from=2-6, to=2-5]
			\arrow["{g'}"', from=2-5, to=3-5]
			\arrow["\lrcorner"{anchor=center, pos=0.125}, draw=none, from=2-5, to=3-6]
			\arrow["g", from=2-6, to=3-6]
			\arrow["p"', shift right, from=3-5, to=3-6]
			\arrow["s"', shift right, from=3-6, to=3-5]
		\end{tikzcd}\]
		For any object $X \in \mathcal{C}$, we have that $\eta_{\T(X)} \colon \T(X) \to \F(\T(X)) = \Z(\F(X))$ thanks to Condition (M'). Clearly, $\varepsilon_{\F(X)} \eta_{\T(X)} = \eta_X t_X$. Given an arrow $f \colon X \to Y$, consider the diagram
		\[\begin{tikzcd}
			\T(X) & X & \F(X) \\
			& \Z(\F(X)) \\
			& \Z(\F(Y)) \\
			\T(Y) & Y & \F(Y)
			\arrow["{t_X}", from=1-1, to=1-2]
			\arrow["{\eta_{\T(X)}}"', from=1-1, to=2-2]
			\arrow[""{name=0, anchor=center, inner sep=0}, "\T(f)"', from=1-1, to=4-1]
			\arrow["{\eta_X}", from=1-2, to=1-3]
			\arrow[""{name=1, anchor=center, inner sep=0}, "\F(f)", from=1-3, to=4-3]
			\arrow["{\varepsilon_{\F(X)}}"', from=2-2, to=1-3]
			\arrow[""{name=2, anchor=center, inner sep=0}, "\Z(\F(f))"{description}, from=2-2, to=3-2]
			\arrow["{\varepsilon_{\F(Y)}}", from=3-2, to=4-3]
			\arrow["{\eta_{\T(Y)}}", from=4-1, to=3-2]
			\arrow["{t_Y}"', from=4-1, to=4-2]
			\arrow["{\eta_Y}"', from=4-2, to=4-3]
			\arrow["{(1)}"{marking, allow upside down}, draw=none, from=0, to=2]
			\arrow["{(2)}"{marking, allow upside down}, draw=none, from=2, to=1]
		\end{tikzcd}\]
		and note that the square (2) commutes by naturality of $\varepsilon$, while square (1) commutes by naturality of $\eta$. We now prove that the following square is a pullback in $\mathcal{C}$:
		\[\begin{tikzcd}[column sep=2cm]
			{T_P} & {\Z(F_P)} \\
			P & {F_P.}
			\arrow["{\Z(\sigma) \eta_{\T(C)} \rho_C}", from=1-1, to=1-2]
			\arrow["k"', from=1-1, to=2-1]
			\arrow["{\varepsilon_{F_P}}", from=1-2, to=2-2]
			\arrow["q"', from=2-1, to=2-2]
		\end{tikzcd}\]
		We start by showing that the square commutes. By definition, $\pi_A q k = \eta_A g' k = \eta_A t_A \rho_A = \varepsilon_{\F(A)} \eta_{\T(A)} \rho_A$. From $\T(p) \rho_A = \T(g) \rho_C$, we deduce:
		\begin{align*}
			&\T(p) \rho_A = \T(g) \rho_C \\
			&\eta_{\T(B)} \T(p) \rho_A = \eta_{\T(B)} \T(g) \rho_C && \text{(applying $\eta_{\T(B)}$ on the left)} \\
			&\Z(\F(p)) \eta_{\T(A)} \rho_A = \Z(\F(g)) \eta_{\T(C)} \rho_C && \text{(by commutativity of (1))} \\
			&\eta_{\T(A)} \rho_A = \inv{\Z(\F(p))} \Z(\F(g)) \eta_{\T(C)} \rho_C && \text{(since $\Z(\F(p))$ is an isomorphism).}
		\end{align*}
		Substituting back, we obtain $\pi_A q k = \varepsilon_{\F(A)} \inv{\Z(\F(p))} \Z(\F(g)) \eta_{\T(C)} \rho_C$. On the other hand,
		\[ \pi_A \varepsilon_{F_P} \Z(\sigma) \eta_{\T(C)} \rho_C = \varepsilon_{\F(A)} \Z(\pi_A) \Z(\sigma) \eta_{\T(C)} \rho_C = \varepsilon_{\F(A)} \inv{\Z(\F(p))} \Z(\F(g)) \eta_{\T(C)} \rho_C, \]
		where the last equality holds because $\Z(\pi_A) \Z(\sigma) = \inv{\Z(\F(p))} \Z(\F(g))$ as they are arrows between zero objects with the same domain and codomain. Now, $\pi_C q k = \eta_C p' k = \eta_C t_C \rho_C$, while $\pi_C \varepsilon_{F_P} \Z(\sigma) \eta_{\T(C)} \rho_C = \varepsilon_{\F(C)} \Z(\pi_C) \Z(\sigma) \eta_{\T(C)} \rho_C = \varepsilon_{\F(C)} \eta_{\T(C)} \rho_C$ (since $\Z(\pi_C)$ and $\Z(\sigma)$ are mutually inverse). Since $\pi_A$ and $\pi_C$ are jointly monomorphic, the square commutes.
		
		To prove that it is a pullback, consider an object $X$ and the diagram
		\[\begin{tikzcd}[column sep=2cm]
			X && \\
			& {T_P} & {\Z(F_P)} \\
			& P & {F_P}
			\arrow["\varphi", dashed, from=1-1, to=2-2]
			\arrow["y", curve={height=-12pt}, from=1-1, to=2-3]
			\arrow["x"', curve={height=12pt}, from=1-1, to=3-2]
			\arrow["{\Z(\sigma) \eta_{\T(C)} \rho_C}", from=2-2, to=2-3]
			\arrow["k"', from=2-2, to=3-2]
			\arrow["{\varepsilon_{F_P}}", from=2-3, to=3-3]
			\arrow["q"', from=3-2, to=3-3]
		\end{tikzcd}\]
		where $q x = \varepsilon_{F_P} y$. We want to show the existence and uniqueness of the dashed arrow $\varphi$ making the triangles commute. Since $\pi_A q x = \pi_A \varepsilon_{F_P} y$, it follows that $\eta_A g' x = \varepsilon_{\F(A)} \Z(\pi_A) y$. Therefore, there exists a unique $\varphi_A \colon X \to \T(A)$ such that $t_A \varphi_A = g' x$ and $\eta_{\T(A)} \varphi_A = \Z(\pi_A) y$:
		\[\begin{tikzcd}
			X && \\
			& \T(A) & \Z(\F(A)) \\
			& A & \F(A).
			\arrow["{\varphi_A}", from=1-1, to=2-2]
			\arrow["{\Z(\pi_A) y}", curve={height=-12pt}, from=1-1, to=2-3]
			\arrow["{g' x}"', curve={height=12pt}, from=1-1, to=3-2]
			\arrow["{\eta_{\T(A)}}", from=2-2, to=2-3]
			\arrow["{t_A}"', from=2-2, to=3-2]
			\arrow["\lrcorner"{anchor=center, pos=0.125}, draw=none, from=2-2, to=3-3]
			\arrow["{\varepsilon_{\F(A)}}", from=2-3, to=3-3]
			\arrow["{\eta_A}"', from=3-2, to=3-3]
		\end{tikzcd}\]
		Similarly, there exists a unique $\varphi_C \colon X \to \T(C)$ such that $t_C \varphi_C = p' x$ and $\eta_{\T(C)} \varphi_C = \Z(\pi_C) y$. Note that $t_B \T(p) \varphi_A = p t_A \varphi_A = p g' x$ and $t_B \T(g) \varphi_C = g t_C \varphi_C = g p' x = p g' x$. Since $t_B$ is a monomorphism, we obtain $\T(p) \varphi_A = \T(g) \varphi_C$. Thus, there exists a unique $\varphi \colon X \to T_P$ such that $\rho_A \varphi = \varphi_A$ and $\rho_C \varphi = \varphi_C$:
		\[\begin{tikzcd}
			X && \\
			& {T_P} & \T(C) \\
			& \T(A) & \T(B).
			\arrow["\varphi", from=1-1, to=2-2]
			\arrow["{\varphi_C}", curve={height=-12pt}, from=1-1, to=2-3]
			\arrow["{\varphi_A}"', curve={height=12pt}, from=1-1, to=3-2]
			\arrow["{\rho_C}", from=2-2, to=2-3]
			\arrow["{\rho_A}"', from=2-2, to=3-2]
			\arrow["\lrcorner"{anchor=center, pos=0.125}, draw=none, from=2-2, to=3-3]
			\arrow["{\T(g)}", from=2-3, to=3-3]
			\arrow["{\T(p)}"', from=3-2, to=3-3]
		\end{tikzcd}\]
		Observe that $g' k \varphi = t_A \rho_A \varphi = t_A \varphi_A = g' x$ and $p' k \varphi = t_C \rho_C \varphi = t_C \varphi_C = p' x$; recalling that $g'$ and $p'$ are jointly monomorphic, we deduce that $k \varphi = x$. Moreover, $\Z(\sigma) \eta_{\T(C)} \rho_C \varphi = \Z(\sigma) \eta_{\T(C)} \varphi_C = \Z(\sigma) \Z(\pi_C) y = y$. Finally, to show that $\varphi$ is unique, consider any $\psi \colon X \to T_P$ such that $k \psi = x$ and $\Z(\sigma) \eta_{\T(C)} \rho_C \psi = y$. We have $t_A \rho_A \psi = g' k \psi = g' x = t_A \varphi_A$. Since $t_A$ is a monomorphism, $\rho_A \psi = \varphi_A$. Similarly, $\rho_C \psi = \varphi_C$. Therefore, $\varphi = \psi$. We have thus proved that $k \colon T_P \to P$ is the $\zeros$-kernel of $q$.
		
		Now, if the reflector $\F \colon \mathcal{C} \to \torf$ is protoadditive, it is straightforward to verify that $q$ is isomorphic to $\eta_P$. Since $k = \Zker(q)$, it follows that $k$ is isomorphic to $t_P$ (and so $\T(P) \cong T_P$). Since $\T \colon \mathcal{C} \to \tort$ preserves limits, we conclude that the radical $\T \colon \mathcal{C} \to \mathcal{C}$ is protoadditive.
		
		Conversely, if $\T \colon \mathcal{C} \to \mathcal{C}$ is protoadditive, it is easy to check that $k$ is isomorphic to $t_P$. Observe that $F_P \in \torf$, as $\torf$ is closed under limits. Since the sequence formed by $k$ and $\eta_P$ is $\zeros$-exact, there exists a unique $\theta \colon \F(P) \to F_P$ such that $\theta \eta_P = q$. Moreover, the morphisms $\Z(\F(p'))$ and $\Z(\F(s'))$ induce an isomorphism $\Z(\F(P)) \cong \Z(\F(C))$. Similarly, the morphisms $\Z(\pi_C)$ and $\Z(\sigma)$ witness that $\Z(F_P) \cong \Z(\F(C))$. Consequently, $\Z(\F(P)) \cong \Z(F_P)$. Furthermore, by Lemma \ref{q regolare}, $q$ is a regular epimorphism; since $\mathcal{C}$ is regular, $\theta$ is a regular epimorphism, too. By applying Lemma $5.4$ of \cite{Cappelletti} to the diagram
		\[\begin{tikzcd}
			{T_P} & P & {\F(P)} \\
			{T_P} & P & {F_P}
			\arrow["k", from=1-1, to=1-2]
			\arrow[equal, from=1-1, to=2-1]
			\arrow["{\eta_P}", from=1-2, to=1-3]
			\arrow[equal, from=1-2, to=2-2]
			\arrow["\theta", from=1-3, to=2-3]
			\arrow["k"', from=2-1, to=2-2]
			\arrow["q"', from=2-2, to=2-3]
		\end{tikzcd}\]
		we deduce that $\theta$ is a monomorphism. Therefore, $\theta$ is an isomorphism. To conclude, observe that $\pi_A \theta \eta_P = \pi_A q = \F(g') \eta_P$. Since $\eta_P$ is an epimorphism, it follows that $\pi_A \theta = \F(g')$ and, similarly, $\pi_C \theta = \F(p')$. Therefore $\F$ is protoadditive.
	\end{proof}
	
	We observe that, if $\tort$ is hereditary with respect to the class of protosplit monomorphisms, it follows easily that $\tort_1$ is also hereditary with respect to protosplit monomorphisms. Applying the previous results, we get the following:
	
	\begin{prop}
		Suppose that $\torsione$ satisfies Conditions (M), (N) and (P). Then $(\tort_1, \torf_1)$ satisfies (P), too.
	\end{prop}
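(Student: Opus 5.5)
The plan is to go through the chain of equivalences established in this section rather than check pullback preservation for $\Hf$ directly. By the proposition relating the two reflectors (which needs Condition (M')), $\Hf$ is protoadditive if and only if the radical $\Ht \colon \arrc \to \arrc$ is protoadditive. By the preceding proposition, this holds if and only if $\tort_1$ is hereditary with respect to protosplit monomorphisms in $\arrc$. So three things must be verified. First, $(\tort_1,\torf_1)$ satisfies (M'). Second, $\arrc$ is regular protomodular, which holds since limits and regular epimorphisms in $\arrc$ are levelwise. Third, $\tort_1$ is hereditary for protosplit monomorphisms.

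For (M'): the torsion theory $(\tort_1,\torf_1)$ satisfies (M) by the earlier proposition, since $\torsione$ satisfies (N) and (M). As observed after the definition of (M'), Condition (M) implies (M'). So the equivalence between protoadditivity of $\Hf$ and of the radical $\Ht$ applies to $(\tort_1,\torf_1)$ in $\arrc$ with the class of zero objects $\arrz$.

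For heredity, I start from $\torsione$. It satisfies (M'), being a consequence of (M), and it satisfies (P), so the radical $\T$ is protoadditive. Hence $\tort$ is hereditary for protosplit monomorphisms. Now take a split epimorphism $(p_1,p_2) \colon t \to b$ in $\arrc$ with section $(s_1,s_2)$, where $t \colon T \to Z$ lies in $\tort_1$. Its $\arrz$-kernel is computed levelwise: it is the arrow $K[p_1] \to K[p_2]$ induced between the $\zeros$-kernels. Here $p_1$ is a split epimorphism with domain $T \in \tort$, so $K[p_1] \in \tort$ by heredity. For the codomain, $p_2 \colon Z \to B$ is split by $s_2$, so $B$ is a retract of $Z$. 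Since $\Z$ is a coreflection whose counit is monic, $B \in \zeros$. The pullback defining $K[p_2]$ is then taken along the counit $\varepsilon_B$, which is an isomorphism. Therefore $K[p_2] \cong \Z(Z)=Z \in \zeros$, the kernel arrow lies in $\tort_1$, and $\tort_1$ is hereditary. Applying the two propositions in reverse then gives that $\Ht$ and hence $\Hf$ are protoadditive, which is (P) for $(\tort_1,\torf_1)$.

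The main obstacle I expect is the step that a retract $B$ of a zero object $Z$ is itself in $\zeros$. My first attempt: $\varepsilon_B$ is a monomorphism, and $p_2$ factors through it because $Z \in \zeros$ and $\Z$ is coreflective. So $\varepsilon_B$ is a monomorphism which is also a split epimorphism (via $p_2 s_2 = 1$), hence an isomorphism. A second point to check is that the two propositions of this section were stated for a general regular protomodular category with a class of zero objects, so they may indeed be applied in $(\arrc,\arrz)$.
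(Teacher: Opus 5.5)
Your proposal is correct and follows the paper's own route. The chain is: (P) for $\torsione$ together with (M') gives protoadditivity of the radical $\T$, hence heredity of $\tort$ for protosplit monomorphisms; this transfers levelwise to $\tort_1$; and the two equivalences, the second using (M') inherited from (M) for $(\tort_1,\torf_1)$, then yield (P) in $\arrc$. Your levelwise check of the heredity of $\tort_1$, including the retract argument giving $K[p_2]\cong Z$, correctly supplies the detail that the paper only says "follows easily."
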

	
	Summing up, we showed that, given a torsion theory $\torsione$ in $\mathcal{C}$ satisfying Conditions (N) and (M), we are able to build a torsion theory $(\tort_1, \torf_1)$ in $\arrc$ which satisfies the same conditions. In this situation, if moreover $\torsione$ satisfies Condition (S) (resp. (P)), then $(\tort_1, \torf_1)$ satisfies Condition (S) (resp. (P)), too. Then, the process can be iterated, getting a sequence of torsion theories $(\tort_n, \torf_n)$ in $\mathrm{Arr}^n(\mathcal{C})$, all of which keep the same good properties of the original torsion theory $\torsione$.

	\section*{Acknowledgements}
	Both authors are members of the Gruppo Nazionale per le Strutture Algebriche, Geometriche e le loro Applicazioni (GNSAGA) dell'Istituto Nazionale di Alta Matematica ``Francesco Severi''.
	
	This work was supported by Shota Rustaveli National Science Foundation of Georgia (SRNSFG), through grant FR-24-9660.

\end{document}